\newtheorem{theorem}{Theorem}
\newtheorem{proposition}[theorem]{Proposition}
\newtheorem{lemma}[theorem]{Lemma}
\newtheorem{definition}[theorem]{Definition}
\newtheorem{remark}[theorem]{Remark}
\newcommand{\CP}{\mathbb{CP}}
\newcommand{\CC}{\mathbb{C}}
\newcommand{\RR}{\mathbb{R}}
\numberwithin{equation}{section}
\numberwithin{theorem}{section}
\numberwithin{table}{section}
\numberwithin{table}{section}
\begin{document}
\bibliographystyle{amsalpha} 
\title[Einstein metrics]{Einstein metrics and Yamabe invariants of weighted projective spaces}
\author{Jeff A. Viaclovsky}
\address{Department of Mathematics, University of Wisconsin, Madison, 
WI, 53706}
\email{jeffv@math.wisc.edu}
\thanks{Research partially supported by NSF Grants DMS-0804042 and DMS-1105187}
\dedicatory{This article is dedicated to the memory of Friedrich Hirzebruch}
\begin{abstract}An orbifold version of the Hitchin-Thorpe inequality is used to prove
that certain weighted projective spaces do not admit orbifold Einstein metrics.
Also, several
estimates for the orbifold Yamabe invariants of 
weighted projective spaces are proved.
\end{abstract}
\date{June 6, 2012.}
\maketitle
\vspace{-5mm}
%\parskip1pt
%\setcounter{tocdepth}{1}
%\tableofcontents
%%%%%%%%%%%%%%%%%%%%%%%%%%%%%%%%%%%%%%%%%%%%%%%%%%%%%%%%
\section{Introduction}
\label{intro}
%%%%%%%%%%%%%%%%%%%%%%%%%%%%%%%%%%%%%%%%%%%%%%%%%%%%%

 This article is concerned with certain orbifolds
in dimension four with isolated singularities modeled on $\RR^4 / \Gamma$,
where $\Gamma$ is a finite subgroup of ${\rm{SO}}(4)$ 
acting freely on $\RR^4 \setminus \{0 \}$. 
The examples considered are
 weighted projective spaces:
\begin{definition} {\em{ For relatively prime integers $1 \leq r \leq q \leq p$, 
the {\em{weighted projective space}} $\mathbb{CP}^2_{(r,q,p)}$ is $S^{5}/S^1$, 
where $S^1$ acts by
\begin{align}
(z_0,z_1,z_2)\mapsto (e^{ir\theta}z_0,e^{iq\theta}z_1 ,e^{ip\theta}z_2),
\end{align}
for $0\leq \theta <2\pi$. 
}}
\end{definition}
The weighted projective space $\mathbb{CP}^2_{(r,q,p)}$ has no singular 
points if and only if $(r,q,p) = (1,1,1)$. In general, the orbifold group at
each singular point is a cyclic group, with action described below in Section \ref{osg}. 

A {\em{Riemannian metric}} on an orbifold is a smooth 
Riemannian metric away from the singular set, such that near any singular point
the metric locally lifts to a smooth $\Gamma$-invariant metric on $B^4$. 
\subsection{Einstein metrics}
The first result is the following non-existence theorem.
 \begin{theorem} 
\label{wpsthm}
If $p > 1$, then the weighted projective space
$\mathbb{CP}^2_{(r,q,p)}$ does not admit any
K\"ahler-Einstein metric with respect to any complex structure. 
Furthermore, if 
\begin{align}
\label{mainineq}
p \geq ( \sqrt{q} + \sqrt{r})^2,
\end{align}
then the weighted projective space 
$\mathbb{CP}^2_{(r,q,p)}$ does not admit {\em{any}}  Einstein metric.
\end{theorem}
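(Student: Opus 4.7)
The two claims call for rather different obstructions, and I would treat them separately.

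For the \emph{K\"ahler-Einstein} statement, the natural tool is the orbifold analogue of the Matsushima-Lichnerowicz theorem: on a compact K\"ahler orbifold admitting a K\"ahler-Einstein metric with $c_1>0$, the identity component of the holomorphic automorphism group must be reductive. However, for $\CP^2_{(r,q,p)}$ with $p>1$, the automorphism group contains unipotent elements (holomorphic vector fields of the form $f(z_0,z_1)\partial_{z_2}$ with $f$ a weighted-homogeneous polynomial of the appropriate weighted degree, or analogous pieces shifted among the coordinates), making the Lie algebra non-reductive. Since the orbifold complex structure is essentially rigid, this rules out a K\"ahler-Einstein metric with respect to any complex structure. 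A parallel route is to compute the orbifold Futaki invariant on the standard torus action and observe that it is non-zero whenever the weights are not all equal.

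For the \emph{general Einstein} statement, the plan is to apply an orbifold Hitchin-Thorpe inequality. For a smooth closed Einstein $4$-manifold one has $2\chi(M)\geq 3|\tau(M)|$, following from integration of the Gauss-Bonnet and signature densities once the Einstein condition kills the traceless-Ricci contribution. For an orbifold with isolated singularities $\RR^4/\Gamma_i$, both invariants acquire correction terms: the Euler characteristic receives the familiar $-(1-1/|\Gamma_i|)$ contributions, while the signature picks up $\eta$-invariant defects from each lens-space link $S^3/\Gamma_i$, computable by Atiyah-Patodi-Singer (and expressible as Dedekind sums). The result is an inequality $2\chi_{\mathrm{orb}}\geq 3|\tau_{\mathrm{orb}}|$ that any orbifold Einstein metric must satisfy.

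To apply this to $\CP^2_{(r,q,p)}$, I would (i) identify the three isolated orbifold points at $[1,0,0]$, $[0,1,0]$, $[0,0,1]$ together with their cyclic isotropy groups, whose orders are read off from the weights; (ii) use that the underlying topological space is homeomorphic to $\CP^2$, so $\chi_{\mathrm{top}}=3$ and $|\tau_{\mathrm{top}}|=1$; (iii) compute the three $\eta$-defects via Dedekind sum formulas; (iv) impose $2\chi_{\mathrm{orb}}<3|\tau_{\mathrm{orb}}|$ and identify the resulting arithmetic condition on $(r,q,p)$. The main obstacle will be step (iv): after the $\eta$-invariant computations, the inequality $2\chi_{\mathrm{orb}}-3|\tau_{\mathrm{orb}}|<0$ becomes a nontrivial rational relation in $r,q,p$, and showing it coincides with $p\geq (\sqrt{q}+\sqrt{r})^2$ should require recognizing the discriminant $4qr$ of a quadratic in $\sqrt p$ (or in $p$ itself). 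Both orientations must be checked, the obstruction coming from whichever sign of $\tau_{\mathrm{orb}}$ is most restrictive.
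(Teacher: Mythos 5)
Your plan for the second claim is essentially the paper's argument: the paper forms $2\chi-3\tau$ using the orbifold Gauss--Bonnet and signature formulas, with the $\eta$-invariant defects expressed as Dedekind sums and summed via Rademacher's triple reciprocity, and arrives at exactly the quadratic $p^2-2(q+r)p+(q-r)^2\leq 0$ whose relevant root is $p_+=(\sqrt q+\sqrt r)^2$. However, you have a gap at the boundary: the theorem asserts non-existence for $p\geq(\sqrt q+\sqrt r)^2$ including equality (which is non-vacuous, e.g.\ $(r,q,p)=(1,4,9)$), and in that case the Hitchin--Thorpe inequality is \emph{not} violated --- your step (iv), ``impose $2\chi_{\mathrm{orb}}<3|\tau_{\mathrm{orb}}|$,'' only covers the strict range. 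The paper handles equality by a rigidity argument: equality forces the Einstein metric to be scalar-flat with $W^-=0$, so $\Lambda^2_-$ is flat, hence trivial by simple connectivity, the holonomy reduces to ${\rm SU}(2)$, and the metric is K\"ahler Ricci-flat --- which is then killed by the first part of the theorem. Without that step your argument proves a strictly weaker statement. (A minor point: the underlying space of $\mathbb{CP}^2_{(r,q,p)}$ is generally not homeomorphic to $\CP^2$ --- the links $S^3/\Gamma_i$ are not simply connected --- but the facts you actually need, $\chi=3$ and $\tau=1$, do hold by Kawasaki's cohomology computation.)

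For the K\"ahler--Einstein claim your route is genuinely different from the paper's, and it has a genuine gap. Matsushima reductivity or a Futaki-invariant computation is carried out relative to a \emph{given} complex structure (the standard one, where the automorphism algebra and the torus action are visible), and the paper explicitly points out that such arguments were already known in that setting; the whole point of the theorem is that no assumption on the complex structure is made. Your sentence ``since the orbifold complex structure is essentially rigid'' is doing all the work and is unjustified --- no such rigidity statement for weighted projective orbifolds is proved or cited, and your argument also tacitly assumes the Fano ($c_1>0$) case. The paper sidesteps all of this with a purely curvature-integral argument in the style of Derdzinski: for \emph{any} K\"ahler metric one has the pointwise identity $|W^+|^2=R^2/24$, and combining the orbifold Gauss--Bonnet and signature formulas (whose right-hand sides depend only on $\chi=3$, $\tau=1$, and the orders $r,q,p$ of the isotropy groups, not on the complex structure) yields
\begin{align}
-\frac{3}{8\pi^2}\int_M|W^-|^2\,dV+\frac{1}{16\pi^2}\int_M|E|^2\,dV
=\frac{1}{2rqp}\bigl((p-r)^2+(p-q)^2+(q-r)^2\bigr),
\end{align}
so a K\"ahler--Einstein metric ($E=0$) forces the right-hand side to be nonpositive, which fails unless $(r,q,p)=(1,1,1)$. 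You would need either to replace your first argument by something of this kind, or to supply a proof of the rigidity of the complex structure, to obtain the theorem as stated.
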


\begin{remark}
{\em Assuming the complex structure is standard, 
non-existence of a K\"ahler-Einstein metric on weighted projective 
spaces for $p > 1$ was shown in previous works \cite{Mabuchi, GMSY, RossThomas}. 
It is emphasized that the non-existence proof given in this paper does not make any assumptions about the complex structure. 
}
\end{remark}
Robert Bryant proved that every weighted projective space admits a Bochner-K\"ahler metric
\cite{Bryant}, and subsequently, David and Gauduchon gave an alternative construction  
and showed that this metric is the unique Bochner-K\"ahler metric on a given weighted 
projective space \cite[Appendix D]{DavidGauduchon}.
Consequently, this metric will be called the {\em{canonical}} Bochner-K\"ahler metric.  
It is noted that this metric is the quotient of a Sasakian structure on 
$S^5$ under the $S^1$-action, which implies that it is an orbifold Riemannian 
metric in the above sense.

Note that in real dimension four,  
Bochner-K\"ahler metrics are the same as self-dual K\"ahler metrics.
Derdzinski \cite{Derdzinski} proved that for 
self-dual K\"ahler metric $g$, the conformal metric 
$\tilde{g}=R_g^{-2}g$ is a self-dual Hermitian {\em{Einstein}} metric,
away from the zero set of the scalar curvature $R_g$. 
This conformal metric is not K\"ahler unless 
$R_g$ is a constant. 

For a weighted projective space $\mathbb{CP}^2_{(r,q,p)}$ with Bochner-K\"ahler metric
$g$, the zero set of the scalar curvature is easily identified 
using \cite[(2.32)]{DavidGauduchon}, which yields the following 3 cases: 
\begin{itemize}
\item
If $p < r+q$, then $R_{g}>0$ everywhere, and
$\tilde{g}$ is a positive Einstein metric.  
\item
If $p = r+q$, then $R_{g}> 0$ except at one point, and
$\tilde{g}$ is Ricci-flat away from this point.  
\item If $p > r+q$, $R_{g}$ vanishes 
along a hypersurface and the complement consists of two open sets 
on which $\tilde{g}$ has negative Einstein constant.  
\end{itemize}
\begin{remark}{\em
In relation to Theorem \ref{wpsthm}, 
$\tilde{g}$ is a global Einstein metric in the case $p < r + q$, but
the author does not know if there exists an 
Einstein metric on $\mathbb{CP}^2_{(r,q,p)}$ in the range 
$ r + q  \leq p < ( \sqrt{q} + \sqrt{r})^2$; this is a
very interesting problem. 
}
\end{remark}

 The main tools used in proving Theorem \ref{wpsthm} 
are an orbifold version of the Hitchin-Thorpe 
inequality \cite{Hitchin1, Thorpe} and the 
triple reciprocity law for Dedekind sums 
of Rademacher \cite{Rademacher1}.
Similar computations for the signature were previously done 
by Hirzebruch and Zagier \cite{HirzebruchZagier, Zagier}.
For another recent application 
of this reciprocity law, see \cite{LockViaclovsky}.

The weighted projective space $\mathbb{CP}^2_{(1,1,p)}$ is 
the one-point compactification of $\mathcal{O}(-p)$, 
the complex line bundle over $\CP^1$, which will be denoted by 
$\widehat{\mathcal{O}(-p)}$ (noting that 
$\mathcal{O}(-p)$ is diffeomorphic to $\mathcal{O}(p)$).
The above theorem in this special case is then simply as follows.
\begin{theorem}
If $p \geq 4$ then 
$\widehat{\mathcal{O}(-p)}$ does not admit 
any Einstein metric. 
\end{theorem}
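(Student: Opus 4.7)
The plan is to recognize this as the special case $r = q = 1$ of Theorem \ref{wpsthm}. First I would verify the identification $\CP^2_{(1,1,p)} \cong \widehat{\mathcal{O}(-p)}$ that is asserted in the paragraph preceding the statement. From the definition of the $S^1$-action on $S^5$ with weights $(1,1,p)$, the orbit of $[0:0:1]$ has stabilizer the $p$-th roots of unity and so contributes an isolated orbifold singularity modeled on $\RR^4/\ZZ_p$; away from this point, the quotient fibers over the $\CP^1$ cut out by $z_2 = 0$, and the disk normal to this $\CP^1$ carries Euler number $-p$ because the weight on $z_2$ is $p$ while the weight on $(z_0, z_1)$ is $1$. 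This identifies the quotient with the one-point compactification of $\mathcal{O}(-p)$ (with orientation absorbed by the fact that $\mathcal{O}(-p) \cong \mathcal{O}(p)$ diffeomorphically).

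Next I would substitute $r = q = 1$ into the inequality (\ref{mainineq}): $(\sqrt{1} + \sqrt{1})^2 = 4$, so the numerical hypothesis of Theorem \ref{wpsthm} collapses to exactly $p \geq 4$. Since in this range $p > 1$ is automatic, both the Einstein and the K\"ahler-Einstein non-existence clauses of Theorem \ref{wpsthm} apply verbatim, and the conclusion follows immediately.

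The main obstacle is not in the present reduction but is entirely located in the proof of Theorem \ref{wpsthm} itself, which the excerpt indicates rests on combining the orbifold version of the Hitchin-Thorpe inequality with Rademacher's triple reciprocity law for Dedekind sums in order to convert the topological Einstein obstruction into an explicit arithmetic inequality among the weights $(r,q,p)$. Granting Theorem \ref{wpsthm}, the present statement is just the cleanest corner case $r = q = 1$, and in that case one expects the Dedekind sum bookkeeping to simplify considerably since there is only one singular point with a diagonal $\ZZ_p$-action.
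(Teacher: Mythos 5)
Your proposal is correct and matches the paper exactly: the paper offers no separate argument for this statement, presenting it as the immediate specialization $r=q=1$ of Theorem \ref{wpsthm}, where $(\sqrt{q}+\sqrt{r})^2 = 4$, together with the identification $\CP^2_{(1,1,p)} \cong \widehat{\mathcal{O}(-p)}$ (stated in the paper without proof, with the sign ambiguity absorbed by $\mathcal{O}(-p)\cong\mathcal{O}(p)$ just as you note). Your additional sketch of that identification is a harmless elaboration, and you correctly locate all the real work inside the proof of Theorem \ref{wpsthm}.
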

The case $p = 1$ is just $\CP^2$ which of course admits
an Einstein metric, the Fubini-Study metric. 
The author does not know if either $\widehat{\mathcal{O}(-2)}$ 
or $\widehat{\mathcal{O}(-3)}$ admits an Einstein 
metric. Exactly as above, $\mathcal{O}(-2)$ does admit 
a complete Ricci-flat Einstein metric, the well-known 
Eguchi-Hanson metric \cite{EguchiHanson}, but this does 
not yield an Einstein metric on the compactification
$\widehat{\mathcal{O}(-2)}$. 

\subsection{Orbifold Yamabe invariants}
The next results deal with orbifold Yamabe invariants
(see \cite{AB2} for background on the orbifold Yamabe problem).
The conformal orbifold Yamabe invariant is defined by
\begin{align}
Y_{\mathrm{orb}}(M,[g]) = \inf_{\tilde{g} \in [g]} \mathrm{Vol}( \tilde{g})^{-1/2}
\int_M R_{\tilde{g}} dV_{\tilde{g}}, 
\end{align}
where $[g]$ denotes the conformal class of $g$. 
The orbifold Yamabe invariant is then defined as 
\begin{align}
Y_{\mathrm{orb}} (M) = \sup_{[g]} Y_{\mathrm{orb}}(M,[g]),
\end{align}
where the supremum is taken over all conformal classes. 

If $M$ is a weighted projective space satisfying $1 \leq r \leq q \leq p$, 
then since $p$ is the size of the largest orbifold group, 
any conformal class satisfies the estimate 
\begin{align}
\label{elemest}
Y_{\mathrm{orb}}(M,[g]) \leq \frac{8 \pi \sqrt{6}}{\sqrt{p}}.
\end{align}
This follows from \cite[Corollary 2.10]{AB2}, and will be called
the {\em{elementary estimate}} of Akutagawa-Botvinnik. 

The main estimate for the orbifold Yamabe invariants of 
weighted projective spaces is the following:
\begin{theorem}
\label{t4}
If $M = \mathbb{CP}^2_{(r,q,p)}$, then 
\begin{align}
\label{uest}
Y_{\mathrm{orb}}(M) \leq 4 \pi \sqrt{2} \frac{(r + q + p)}{\sqrt{rqp}},
\end{align}
and if 
\begin{align}
\label{in3}
p < ( \sqrt{r} + \sqrt{q})^2, 
\end{align}
then the lower estimate
\begin{align}
\label{oyf1}
 Y_{\mathrm{orb}}(M) \geq 4 \pi \sqrt{6} 
\sqrt{ \frac{2}{r} + \frac{2}{q} + \frac{2}{p}
- \frac{r}{pq} -\frac{q}{pr}- \frac{p}{qr}}  
\end{align}
is satisfied. Furthermore, 
if $r + q \leq p < ( \sqrt{r} + \sqrt{q})^2$
then strict inequality holds in~\eqref{oyf1}.
\end{theorem}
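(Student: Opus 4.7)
My approach separates the three assertions, each exploiting the self-dual Kähler structure of the canonical Bochner-Kähler metric $g_{BK}$ on $M=\CP^2_{(r,q,p)}$ (recall that Bochner-Kähler coincides with self-dual Kähler in real dimension four, so $W^-\equiv 0$). I take as given from earlier sections the orbifold topological computations, obtained via Rademacher's triple reciprocity for Dedekind sums:
\begin{align*}
c_1^2(M) = \frac{(r+q+p)^2}{rqp}, \qquad 2\chi_{\mathrm{orb}}(M) - 3\tau_{\mathrm{orb}}(M) = \frac{2}{r}+\frac{2}{q}+\frac{2}{p}-\frac{r}{pq}-\frac{q}{pr}-\frac{p}{qr}.
\end{align*}

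For the upper bound~\eqref{uest}, I would first evaluate the Yamabe functional at $g_{BK}$. Using the Kähler identities $\int_M R\,dV = 4\pi\,c_1\cdot[\omega]$ and $\mathrm{Vol}=[\omega]^2/2$, with $[\omega]$ proportional to $c_1$, a direct computation gives $\mathcal{Y}(g_{BK}) = 4\pi\sqrt{2\,c_1^2} = 4\pi\sqrt{2}(r+q+p)/\sqrt{rqp}$. Promoting this evaluation from a bound on $Y_{\mathrm{orb}}(M,[g_{BK}])$ to a bound on $Y_{\mathrm{orb}}(M)=\sup_{[g]}Y_{\mathrm{orb}}(M,[g])$ requires the orbifold analog of LeBrun's Seiberg-Witten inequality $Y(M)\leq\sqrt{32\pi^2 c_1^2}$ for Kähler orbifolds with positive first Chern class. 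Extending LeBrun's monopole-class argument to the orbifold category, and in particular controlling reducible solutions and the analysis near the orbifold singular points, is the main technical hurdle.

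For the non-strict lower bound~\eqref{oyf1}, I would use the trial conformal class $[g_{BK}]$, which is self-dual because the condition $W^-\equiv 0$ is conformally invariant. If $g^\ast\in[g_{BK}]$ is a constant-scalar-curvature Yamabe representative, so that $Y_{\mathrm{orb}}(M,[g_{BK}])^2=(R^\ast)^2\,\mathrm{Vol}(g^\ast)$, then the Gauss-Bonnet identity
\begin{align*}
8\pi^2\chi_{\mathrm{orb}}(M) = \int_M\bigl(|W^+|^2 - \tfrac{1}{2}|E|^2 + \tfrac{R^2}{24}\bigr)\,dV_{g^\ast}
\end{align*}
combined with the conformally invariant signature identity $\int_M|W^+|^2\,dV_{g^\ast} = 12\pi^2\tau_{\mathrm{orb}}(M)$ (which can be computed on the Kähler representative, where $|W^+|^2 = R^2/24$) rearranges to $(R^\ast)^2\mathrm{Vol}(g^\ast) = 96\pi^2\bigl(2\chi_{\mathrm{orb}}-3\tau_{\mathrm{orb}}\bigr) + 12\int|E|^2\,dV_{g^\ast}$. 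Dropping the non-negative Einstein-defect term and taking square roots produces~\eqref{oyf1}; the hypothesis~\eqref{in3} is precisely what makes $2\chi_{\mathrm{orb}}-3\tau_{\mathrm{orb}}$ positive, since via Heron's formula it factors as $(\sqrt r+\sqrt q+\sqrt p)(-\sqrt r+\sqrt q+\sqrt p)(\sqrt r-\sqrt q+\sqrt p)(\sqrt r+\sqrt q-\sqrt p)/(rqp)$.

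For the strict inequality when $r+q\leq p<(\sqrt r+\sqrt q)^2$, equality in the preceding estimate would force $\int|E|^2\,dV_{g^\ast}=0$, so $g^\ast$ would itself be an Einstein metric in the conformal class of the self-dual Kähler metric $g_{BK}$. By Derdzinski's uniqueness theorem, the only possible Einstein representative in this conformal class is $\tilde g = R_{g_{BK}}^{-2}g_{BK}$ (up to scale); but in the range $r+q\leq p$, the scalar curvature $R_{g_{BK}}$ vanishes along a hypersurface, so $\tilde g$ is not a smooth Riemannian metric on $M$. Hence no such Einstein representative can exist, and strict inequality must hold.
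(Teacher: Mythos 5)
Your outline identifies the right quantities, but two of the three assertions rest on steps you have not actually supplied. For the upper bound \eqref{uest}, evaluating the Yamabe functional at the Bochner--K\"ahler metric only bounds $Y_{\mathrm{orb}}(M,[g_{BK}])$ for that one conformal class; to control $\sup_{[g]}$ you yourself note that one needs a curvature estimate valid in \emph{every} conformal class, and you defer exactly that as ``the main technical hurdle.'' That hurdle is the content of the paper's proof (Proposition~\ref{gthm}): it is not a Seiberg--Witten/monopole argument but the Gursky--LeBrun $\mathrm{Spin}^c$ Dirac-operator argument, already adapted to orbifolds by Akutagawa--Botvinnik. Concretely, Fukumoto's index theorem gives $\mathrm{Ind}(D)=1$ for the $\mathrm{Spin}^c$ structure of the anticanonical bundle on any weighted projective space, hence a nonzero harmonic spinor for any positive-scalar-curvature metric; the Lichnerowicz--Bochner formula, the refined Kato inequality, and the substitution $u=|\psi|^{2/3}$ then yield $Y_{\mathrm{orb}}(M,[g])\le 4\pi\sqrt{2}\bigl(\int_M c_1(L)^2\bigr)^{1/2}$, and $c_1^2=(r+q+p)^2/(rqp)$ follows from the Euler characteristic and signature formulas. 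Without this your proof of \eqref{uest} is a citation of the theorem to be proved.

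For the lower bound \eqref{oyf1} your Gauss--Bonnet/signature computation in the self-dual conformal class $[g_{BK}]$ is the same as the paper's, but you assume a constant-scalar-curvature Yamabe representative $g^\ast$ exists. On orbifolds this is not automatic; the paper handles it by contradiction: if \eqref{oyf1} failed, then $Y_{\mathrm{orb}}(M,[g_{BK}])$ would lie strictly below $8\pi\sqrt{6}/\sqrt{p}$ (this uses $(p-(r+q))^2\ge 0$ together with \eqref{in3}), so the Akutagawa--Botvinnik existence theorem produces a minimizer, and your identity then gives the contradiction. You need this detour, or some other justification of existence. Finally, for strictness when $r+q\le p$, your appeal to a ``Derdzinski uniqueness theorem'' is not a valid step as stated: uniqueness of Einstein representatives in a conformal class fails in general (two non-homothetic conformal Einstein metrics can coexist, e.g.\ on the round sphere), so the degeneracy of $R_{g_{BK}}^{-2}g_{BK}$ does not by itself exclude some \emph{other} Einstein metric in $[g_{BK}]$. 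The paper's Lemma~\ref{noEinstein} closes this: a second Einstein metric would produce a nonconstant concircular scalar field on the complete Einstein pieces of $R_{g_{BK}}^{-2}g_{BK}$, and Tashiro's classification is then ruled out using the nontrivial self-intersection in $H_2(M)$ and the fact that $g_{BK}$ is not locally conformally flat. Your Heron-type factorization of $2\chi_{\mathrm{orb}}-3\tau_{\mathrm{orb}}$ and the evaluation of the Yamabe energy of $g_{BK}$ are correct.
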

The upper and lower estimates on 
the Yamabe invariant in Theorem \ref{t4} 
coincide only for  $(p,q,r) = (1,1,1)$. 
In this case, the Fubini-Study metric is a {\em{supreme}} 
Einstein metric, using terminology of LeBrun \cite{LeBrunEinstein}.
In the case $p < q + r$,  the lower bound in \eqref{oyf1} is 
in fact the Yamabe energy of 
the Einstein metric $\tilde{g}$. 
Interestingly, the upper bound in \eqref{uest} turns out to
be the Yamabe energy of the canonical Bochner-K\"ahler metric.
However, for $p > 1$, this is {\em{not}} a Yamabe minimizer in its conformal 
class; it does not even have constant scalar curvature.
The upper estimate in  \eqref{uest} is likely not sharp;
except for the Fubini-Study metric, 
the upper bound in \eqref{uest} is not attained by any conformal class:
\begin{theorem}
\label{t5}
If $M = \mathbb{CP}^2_{(r,q,p)}$ and $p > 1$, then any conformal class $[g]$ satisfies
\begin{align}
\label{uestconf}
Y_{\mathrm{orb}}(M, [g]) < 4 \pi \sqrt{2} \frac{(r + q + p)}{\sqrt{rqp}}.
\end{align}
\end{theorem}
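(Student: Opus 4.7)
The plan is to strengthen \eqref{uest} of Theorem \ref{t4} to a strict inequality for every conformal class when $p > 1$, by ruling out the possibility of equality. Since Theorem \ref{t4} already supplies the weak inequality $Y_{\mathrm{orb}}(M, [g]) \leq 4\pi\sqrt{2}(r+q+p)/\sqrt{rqp}$ for every conformal class $[g]$, the task is reduced to the equality case analysis.

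The starting observation is that the upper bound $4\pi\sqrt{2}(r+q+p)/\sqrt{rqp}$ can be rewritten as $\sqrt{32\pi^2 c_1^2(M)}$, which is precisely the Yamabe energy of a hypothetical K\"ahler-Einstein metric on $\mathbb{CP}^2_{(r,q,p)}$ with K\"ahler class proportional to $c_1$. I therefore expect the proof of \eqref{uest} to proceed through an intermediate bound of the shape $Y_{\mathrm{orb}}(M,[g])^2 \leq 32\pi^2 c_1^2$, in which equality is characterized by the existence of a K\"ahler-Einstein representative in $[g]$. Concretely, one takes a Yamabe minimizer $g_0 \in [g]$ of constant scalar curvature (available by the orbifold Yamabe problem), writes $Y_{\mathrm{orb}}(M,[g])^2 = \int_M R_{g_0}^2\,dV_{g_0}$, and combines the orbifold Chern-Gauss-Bonnet and Hirzebruch signature formulas with curvature inequalities on $|W^+|^2$ and the traceless Ricci tensor $E$ to bound the right-hand side by $32\pi^2 c_1^2$. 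The equality case of these inequalities forces $E \equiv 0$ together with the algebraic form of K\"ahler self-dual Weyl curvature, which identify $g_0$ as a K\"ahler-Einstein metric with K\"ahler class proportional to $c_1$.

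With this equality characterization in hand, Theorem \ref{t5} follows by invoking Theorem \ref{wpsthm}: for $p > 1$, the weighted projective space admits no K\"ahler-Einstein metric with respect to any complex structure, so the equality case of \eqref{uest} is vacuous and the strict bound \eqref{uestconf} holds. The principal obstacle is the equality analysis itself. One must verify that saturating the curvature inequalities really produces an \emph{integrable} orbifold K\"ahler structure -- not merely an Einstein metric whose self-dual Weyl tensor has K\"ahler-type algebra -- so that the hypothesis of Theorem \ref{wpsthm} applies literally. Carefully tracking the orbifold correction terms in the Gauss-Bonnet and signature identities, and verifying integrability in the equality case (in the spirit of Derdzinski's theorem on Einstein four-manifolds with degenerate self-dual Weyl tensor), is the delicate piece of the argument.
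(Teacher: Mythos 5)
Your overall skeleton --- bound $Y_{\mathrm{orb}}(M,[g])^2$ by $32\pi^2 c_1^2(M) = 32\pi^2(r+q+p)^2/(rqp)$, characterize equality by the existence of a K\"ahler--Einstein representative, and then invoke the non-existence of K\"ahler--Einstein metrics for $p>1$ --- does match the paper's strategy (Proposition \ref{gthm}). But the mechanism you propose for establishing the bound and its equality case does not work, and that mechanism is where the real content lies. Adding $2$ times the Gauss--Bonnet identity \eqref{eulerwps2} to $3$ times the signature identity \eqref{tauwps3} gives
\begin{align*}
\frac{1}{4\pi^2}\int_M \Big(2|W^+|^2 - \tfrac12 |E|^2 + \tfrac{1}{24}R^2\Big)\, dV_g = \frac{(r+q+p)^2}{rqp},
\end{align*}
and when you solve for $\int R^2$ the terms $|W^+|^2$ and $|E|^2$ enter with \emph{opposite} signs, so no pointwise curvature inequality converts this into the desired bound; at best you would obtain the constant $96\pi^2$ rather than $32\pi^2$, and only after controlling $\int|E|^2$ by $\int|W^+|^2$, which you have no means to do. The actual proof is the Gursky--LeBrun $\mathrm{Spin}^c$ argument: Fukumoto's index computation gives $\mathrm{Ind}(D)=1$ for the $\mathrm{Spin}^c$ Dirac operator of the anti-canonical class, hence a nonzero harmonic spinor $\psi$; the Weitzenb\"ock formula combined with the refined Kato inequality $|\nabla\psi|^2 \geq \tfrac43|\nabla|\psi||^2$ shows that the explicit test function $u=|\psi|^{2/3}$ has Yamabe quotient at most $2\sqrt{2}\big(\int|F^+|^2\big)^{1/2}$, and $b_2^-(M)=0$ turns this into $4\pi\sqrt{2}\big(\int c_1^2\big)^{1/2}$. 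The exponent $2/3$ and the refined Kato constant are exactly what produce the sharp constant $32\pi^2$; this cannot be extracted from integral curvature identities alone.

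A second, independent gap: you take a constant scalar curvature Yamabe minimizer $g_0\in[g]$ as ``available by the orbifold Yamabe problem.'' The paper explicitly warns that solvability of the orbifold Yamabe problem is \emph{not} guaranteed; it is only invoked below the threshold $8\pi\sqrt{6}/\sqrt{p}$ in the proof of Proposition \ref{oyi2}, where that smallness is verified first. The spinor argument sidesteps this entirely: the inequality is proved by exhibiting a test function, and in the equality case that very test function is forced to be a minimizer, after which elliptic regularity, the equality cases of the Kato and Cauchy--Schwarz inequalities, and Moroianu's theorem (a parallel $\mathrm{Spin}^c$ spinor forces the metric to be K\"ahler) deliver precisely the integrability you were rightly worried about; a further application of \eqref{eulerwps2} and \eqref{tauwps3} then shows the metric is also Einstein, contradicting Theorem \ref{noKE}. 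So your concluding step is correctly anticipated, but the path to it is not viable as written.
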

Note that in case 
\begin{align}
4 \pi \sqrt{2} \frac{(r + q + p)}{\sqrt{rqp}} > \frac{8 \pi \sqrt{6}}{\sqrt{p}},
\end{align}
Theorem \ref{t5} is trivial and follows 
from the elementary estimate \eqref{elemest}. 
However, there are many cases when the upper bound in \eqref{uestconf} 
is strictly smaller than the elementary estimate (see 
Theorem \ref{ulthm} below). 

The proof of \eqref{oyf1} follows more or less immediately from the 
Hitchin-Thorpe inequality on orbifolds used to prove Theorem \ref{wpsthm}.
However, the proof of \eqref{uest} is more subtle, and 
follows the idea of Gursky-LeBrun \cite{GurskyLeBrun} adapted to 
orbifolds by Akutagawa-Botvinnik \cite{AB2}.  For convenience, 
a slightly simplified proof of this result is given in Section~\ref{YI},
which is also used to prove Theorem \ref{t5}.
In \cite{AB2}, the estimate \eqref{uest}
was applied to the example of $\mathcal{O}(-p)$ (the case
of $\CP^2_{(1,1,p)}$), but the upper estimate \eqref{uest} 
is not ``effective'' for $p > 1$ since 
\eqref{uest} is larger than the elementary estimate \eqref{elemest}
in that case. 
So it is only interesting when the upper estimate given 
in \eqref{uest} is strictly smaller than the elementary estimate \eqref{elemest}. 
This turns out to hold for a large class of weighted projective spaces:
\begin{theorem}Let $M = \mathbb{CP}^2_{(r,q,p)}$, with $1 \leq r \leq q \leq p$. 
\label{ulthm}
If 
\begin{align}
\label{pin}
p < (2 \sqrt{3} - 3) q + r \sim 0.464 q + r, 
\end{align}
then 
\begin{align}
\label{oyf2}
0 <  4 \pi \sqrt{6} 
\sqrt{ \frac{2}{r} + \frac{2}{q} + \frac{2}{p}
- \frac{r}{pq} -\frac{q}{pr}- \frac{p}{qr}} 
\leq
Y_{\mathrm{orb}}(M)
 \leq  4 \pi \sqrt{2} \frac{(r + q + p)}{\sqrt{rqp}} < \frac{8 \pi \sqrt{6}}{\sqrt{p}}. 
\end{align}
\end{theorem}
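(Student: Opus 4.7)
The plan is to obtain \eqref{oyf2} by combining Theorem \ref{t4} with one elementary algebraic inequality. Because $2\sqrt{3} - 3 < 1$, the hypothesis \eqref{pin} yields $p < q + r \leq (\sqrt{r} + \sqrt{q})^2$, so condition \eqref{in3} holds and both the lower and upper estimates of Theorem \ref{t4} apply; these supply the two middle inequalities in \eqref{oyf2}. What remains is to prove positivity of the lower bound and the strict inequality between the upper bound and the elementary estimate \eqref{elemest}.

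For positivity, I would clear denominators: the expression under the square root in \eqref{oyf1} equals
\begin{equation*}
\frac{2pq + 2pr + 2qr - p^2 - q^2 - r^2}{pqr} = \frac{4qr - (p - q - r)^2}{pqr},
\end{equation*}
which is positive precisely when $(\sqrt{q} - \sqrt{r})^2 < p < (\sqrt{q} + \sqrt{r})^2$. The right-hand inequality is \eqref{in3}, already supplied by \eqref{pin}; the left-hand inequality follows from $(\sqrt{q} - \sqrt{r})^2 < q \leq p$, with strictness because $r \geq 1$.

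The heart of the argument is the outer strict inequality. Clearing common factors, $4\pi\sqrt{2}(r + q + p)/\sqrt{rqp} < 8\pi\sqrt{6}/\sqrt{p}$ reduces to $r + q + p < 2\sqrt{3rq}$. Substituting \eqref{pin} for $p$, it suffices to show
\begin{equation*}
r + (\sqrt{3} - 1)q \leq \sqrt{3rq}.
\end{equation*}
Squaring (both sides are positive) and dividing by $q^2$ with $t = r/q$, this becomes the quadratic inequality $t^2 + (2\sqrt{3} - 5)t + (4 - 2\sqrt{3}) \leq 0$. Using the identity $(2\sqrt{3} - 3)^2 = 21 - 12\sqrt{3}$ to compute the discriminant, the roots are exactly $t = 4 - 2\sqrt{3}$ and $t = 1$, so the quadratic factors as $(t - 1)\bigl(t - (4 - 2\sqrt{3})\bigr)$ and is non-positive precisely when $4 - 2\sqrt{3} \leq t \leq 1$. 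The upper bound is $r \leq q$; the lower bound follows from combining \eqref{pin} with $p \geq q$, since $q \leq p < (2\sqrt{3} - 3)q + r$ forces $r > (4 - 2\sqrt{3})q$. If this inequality failed, \eqref{pin} would admit no integer $p \geq q$ and the theorem would be vacuous.

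The only real obstacle is recognizing the clean factorization above, and the fact that $t = 1$ appears as a root (equality at the extreme case $r = q$) reveals that the constant $2\sqrt{3} - 3$ in \eqref{pin} is sharp for this approach. In particular, the theorem cannot be extended to a strictly larger range of $p$ purely by comparing the upper bound of Theorem \ref{t4} to the elementary estimate \eqref{elemest}.
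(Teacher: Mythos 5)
Your proof is correct, and for the key step it takes a genuinely different route from the paper. Both arguments reduce the outer strict inequality to $(r+q+p)^2 < 12rq$, but the paper then passes to the normalized variables $x = r/p$, $y = q/p$ and argues geometrically: the level curve $(x+y+1)^2 = 12xy$ is convex in the triangle $\{0 \le x \le y \le 1\}$, hence lies below the chord joining its boundary endpoints, and the region above that chord (which translates back to $p < (2\sqrt{3}-3)q + r$) is therefore contained in the region where the inequality holds. You instead substitute the hypothesis \eqref{pin} directly, reduce to $r + (\sqrt{3}-1)q \le \sqrt{3rq}$, and verify this by factoring the quadratic in $t = r/q$ as $(t-1)\bigl(t-(4-2\sqrt{3})\bigr) \le 0$, with the lower root handled by the observation that \eqref{pin} together with $p \ge q$ forces $t > 4 - 2\sqrt{3}$ (else the hypothesis is vacuous). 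Your computation is more elementary and self-contained -- it avoids having to justify convexity of the level set, which the paper asserts without detail -- and it yields the additional observation that the constant $2\sqrt{3}-3$ is sharp for this method of comparison (equality at $t=1$, i.e.\ $r=q$); the paper's chord construction explains \emph{where} that constant comes from rather than merely verifying it. The remaining pieces (deducing $p < q+r$ so that Theorem \ref{t4} applies, and the positivity of the lower bound via $2pq+2pr+2qr-p^2-q^2-r^2 = 4qr - (p-q-r)^2 > 0$) match the paper's treatment.
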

To conclude, it is remarked that 
only a few orbifold Yamabe invariants are known exactly.
For example, in \cite{ViaclovskyFourier} it was shown that  
the orbifold conformal compactification of a hyperk\"ahler ALE metric
in dimension four has maximal orbifold Yamabe invariant. 
That argument also gives an exact determination of the
orbifold Yamabe invariant in the ``critical'' case $p = q + r$:
\begin{theorem}
\label{crithm}
Let $M = \mathbb{CP}^2_{(r,q,p)}$, and let $g$ be the canonical 
Bochner-K\"ahler metric. If $p = q + r$,  then 
there is no constant scalar curvature metric in the 
conformal class of $g$, and 
\begin{align}
\label{maxest}
Y_{\mathrm{orb}}(M,[g]) =  \frac{8 \pi \sqrt{6}}{\sqrt{p}}.
\end{align}
Consequently, 
\begin{align}
\label{ycrit}
Y_{\mathrm{orb}}(M) = \frac{8 \pi \sqrt{6}}{\sqrt{p}}.
\end{align}
\end{theorem}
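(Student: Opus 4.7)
The plan is to identify the Derdzinski metric $\tilde{g} = R_g^{-2} g$ in the critical case $p = q + r$ as the conformal compactification of a hyperk\"ahler ALE orbifold, and to invoke the general principle from \cite{ViaclovskyFourier} that such conformal compactifications saturate the Akutagawa--Botvinnik elementary estimate. First, the excerpt already tells us that $\tilde{g}$ is Ricci-flat on $M \setminus \{x_0\}$, where $x_0$ is the unique zero of $R_g$. Self-duality (inherited from the Bochner-K\"ahler structure on $g$) combined with Ricci-flatness makes $\tilde{g}$ hyperk\"ahler on its domain of definition. A local analysis near $x_0$ using \cite[(2.32)]{DavidGauduchon} pinpoints $x_0$ as the vertex of $\mathbb{CP}^2_{(r,q,p)}$ whose isotropy group has the largest order, namely $p$; then the blow-up of $\tilde{g}$ at $x_0$ shows that $(M \setminus \{x_0\}, \tilde{g})$ is ALE with asymptotic cone $\mathbb{R}^4/\Gamma$ for $|\Gamma| = p$.

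With this identification, the upper bound $Y_{\mathrm{orb}}(M, [g]) \leq 8\pi\sqrt{6}/\sqrt{p}$ is immediate from \eqref{elemest}, and the matching lower bound is extracted from the ALE hyperk\"ahler structure as in \cite{ViaclovskyFourier}. The idea is to exhibit a sequence of conformal test factors which, after suitable rescaling, behave like Aubin bubbles concentrated at $x_0$; because $R_{\tilde{g}} \equiv 0$, the scalar-curvature contribution of each test metric is supported on a shrinking transition annulus, and the Yamabe functional converges to the sharp Sobolev constant on $\mathbb{R}^4/\Gamma$, which equals $8\pi\sqrt{6}/\sqrt{p}$. This proves \eqref{maxest}. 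Non-existence of a constant scalar curvature representative in $[g]$ then follows: any such metric would be a Yamabe minimizer achieving the bubble value, and pulling back through $\tilde{g}$ converts the CSC equation into a rigid equation for a positive (super-)harmonic function on the complete ALE hyperk\"ahler orbifold $(M \setminus \{x_0\}, \tilde{g})$, forcing the conformal class to be that of a round sphere quotient, which is incompatible with $p = q + r > 1$ and the presence of additional singular points at the two other vertices.

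The estimate \eqref{ycrit} for the global Yamabe invariant is then immediate: \eqref{elemest} applied uniformly to every conformal class gives $Y_{\mathrm{orb}}(M) \leq 8\pi\sqrt{6}/\sqrt{p}$, while $Y_{\mathrm{orb}}(M) \geq Y_{\mathrm{orb}}(M, [g]) = 8\pi\sqrt{6}/\sqrt{p}$ from \eqref{maxest}. The principal difficulty will be the quantitative test-function analysis in the second paragraph: one must match the asymptotic behavior of the bubble test functions on the ALE end of $\tilde{g}$ with the sharp Sobolev constant on $\mathbb{R}^4/\Gamma$, while simultaneously controlling the error contributions coming from the transition region. The Ricci-flatness of $\tilde{g}$ is essential in making this reduction clean, since it eliminates an interior bulk term that would otherwise spoil the sharp constant.
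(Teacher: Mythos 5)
There is a genuine gap at the center of your argument: you attempt to prove the \emph{lower} bound $Y_{\mathrm{orb}}(M,[g]) \geq 8\pi\sqrt{6}/\sqrt{p}$ by exhibiting a sequence of test conformal factors (Aubin-type bubbles concentrating at $x_0$) whose Yamabe energy converges to $8\pi\sqrt{6}/\sqrt{p}$. But $Y_{\mathrm{orb}}(M,[g])$ is an \emph{infimum} over the conformal class; test functions can only ever produce \emph{upper} bounds on an infimum. Your bubble construction would show $Y_{\mathrm{orb}}(M,[g]) \leq 8\pi\sqrt{6}/\sqrt{p}$, which is already the elementary estimate \eqref{elemest}, and says nothing about whether some other conformal factor achieves a strictly smaller energy. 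The paper obtains the lower bound by contradiction instead: if $Y_{\mathrm{orb}}(M,[g])$ were strictly below the threshold $8\pi\sqrt{6}/\sqrt{p}= Y(S^4/\Gamma)$, then the orbifold Yamabe problem would be solvable in $[g]$ by \cite[Theorem 5.2]{AB1} or \cite[Theorem 3.1]{Akutagawacoyi}, producing a constant scalar curvature representative; so everything reduces to ruling out CSC metrics in $[g]$, and equality in \eqref{maxest} then follows. Without this existence-below-threshold input, your outline cannot close.

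The non-existence of a CSC metric is therefore the real content, and your sketch of it is not a proof. The assertion that a CSC metric "would be a Yamabe minimizer achieving the bubble value" is unjustified (a CSC representative need not minimize), and the claim that the CSC equation relative to the Ricci-flat background $\tilde{g}$ becomes a rigid equation for a (super-)harmonic function forcing a round sphere quotient is not the correct mechanism. The paper's argument is the Obata integration by parts: writing $\tilde{g} = \phi^{-2}\hat{g}$ for a putative CSC metric $\hat{g}$, one has $E_{\hat{g}} = \phi^{-1}(-2\nabla^2\phi + (\Delta\phi/2)\tilde{g})$, pairs with $\phi E_{\hat{g}}^{ij}$, integrates, and uses the Bianchi identity; the delicate point — which you do not address — is the boundary term at $x_0 = [0,0,1]$, where $\phi \sim \rho^2$ degenerates, handled via the orbifold regularity result \cite[Theorem 6.4]{TV2}. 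This yields $E_{\hat{g}} \equiv 0$, i.e.\ $\hat{g}$ is Einstein, and one then invokes Lemma \ref{noEinstein}: two Einstein metrics in one conformal class produce a concircular scalar field on the complete manifold $(M\setminus\{x_0\},\tilde{g})$, and Tashiro's classification \cite{Tashiro} is contradicted by the cohomology ring of $M$ and the failure of local conformal flatness. Your first paragraph (the hyperk\"ahler ALE identification) is plausible but is not actually needed for the paper's argument, which uses only the Ricci-flatness and completeness of $\tilde{g}$ away from $x_0$.
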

The proof of this result is  based on the Obata argument \cite{Obata},
and is more or less is the same as \cite[Theorem 1.3]{ViaclovskyFourier},
with a few minor modifications. 
\begin{remark}{\em
The author does not know if the orbifold Yamabe problem has a solution 
if $p > r + q$ on $\mathbb{CP}^2_{(r,q,p)}$ in the conformal 
class of the canonical Bochner-K\"ahler metric $g$. However, {\em{symmetric}} solutions 
were ruled out in the case $(1,1,p)$ in \cite[Theorem 1.4]{ViaclovskyFourier}.  
}
\end{remark}

%%%%%%%%%%%%%%%%%%

\subsection{Acknowledgments}
The author would like to thank Michael T. Lock for 
very useful discussions, and 
Xiaodong Wang for assistance with the argument in Theorem~\ref{gthm}.
The author would also like to thank the anonymous referee who made numerous helpful suggestions to improve the exposition of the paper. 
%%%%%%%%%%%%%%%%%%%%%%%%%%%%%%%%%%%%%%%%%%%%%%%%%%%%%%%%%%%%%%%
\section{Einstein metrics}
\label{ogi}
%%%%%%%%%%%%%%%%%%%%%%%%%%%%%%%%%%%%%%%%%%%%%%%%%%%%%%%%%%%%%%%

 Let $(M,g)$ be a Riemannian orbifold with singular points $x_i$, 
$i = 1 \dots N$. 
The Euler characteristic is given by 
\begin{align}
\label{euler}
\chi(M)=\frac{1}{8\pi^2}\int_{M}\Big(|W|^2-\frac{1}{2}|E|^2+\frac{1}{24}R^2\Big)dV_{g}
+\sum_{i =1}^N\frac{|\Gamma_i|-1}{|\Gamma_i|},
\end{align}
where $E$ denotes the traceless Ricci tensor $E = Ric - (R/4) g$, 
and the signature is given by 
\begin{align}
\label{tau}
\tau(M)=\frac{1}{12\pi^2}\int_{M}(|W^+|^2-|W^-|^2)dV_{g}- \sum_{i=1}^N \eta(S^3/\Gamma_i),
\end{align}
where $\Gamma_i \subset {\rm{SO}}(4)$ 
is the orbifold group around the point $p_i$,
and $\eta(S^3/\Gamma_i)$ is the eta-invariant.
See \cite{Hitchin, Nakajima} for a discussion of the 
formulas $\eqref{euler}$ and $\eqref{tau}$.

\subsection{Cyclic group actions}
For $1 \leq q < p$ relatively prime integers, denote by $\Gamma_{(q,p)}$ the 
cyclic action 
\begin{align}
\label{qpaction}
\left(
\begin{matrix}
\exp^{2 \pi i k / p}   &  0 \\
0    & \exp^{2 \pi i k q / p } \\
\end{matrix}
\right),  \ \ 0 \leq k < p,
\end{align}
acting on $\RR^4 \simeq \CC^2$. The action  
$\Gamma_{(q,p)}$ will be referred to as a type $(q,p)$-action.
If $\Gamma_i$ is conjugate to a $\Gamma_{(q,p)}$ action in ${\rm{SO}}(4)$,  
then
\begin{align}
\eta(S^3/\Gamma_i) = 4 s(q,p),
\end{align}
where
\begin{align}
\label{Dedekind}
s(q,p) = \frac{1}{4p}\sum_{j=1}^{p-1}\Big[\cot(\frac{\pi}{p}j)\cot(\frac{\pi}{p}qj)\Big]
\end{align}
is the well-known Dedekind sum \cite{APS}.

\subsection{Weighted projective spaces}
\label{osg}
For relatively prime integers $a < b$, let 
$a^{-1;b}$  denote the inverse of $a$ modulo $b$.
On $\mathbb{CP}^2_{(r,q,p)}$ there are three possible orbifold 
points:
\begin{enumerate}
\item $[1,0,0]$ with a type $(q^{-1;r}p,r)$-action.
\item $[0,1,0]$ with a type $(p^{-1;q}r,q)$-action.
\item $[0,0,1]$ with a type $(r^{-1;p}q,p)$-action.
\end{enumerate}
Consequently, on a weighted projective space, the Chern-Gauss-Bonnet 
formula is 
\begin{align}
\label{eulerwps}
\chi(M)=\frac{1}{8\pi^2}\int_{M}\Big( |W|^2 -\frac{1}{2}|E|^2 + \frac{1}{24}R^2 \Big)dV_{g}
+ \bigg[  \frac{|r|-1}{|r|} +  \frac{|q|-1}{|q|} +
\frac{|p|-1}{|p|} \bigg].
\end{align}
Since $\chi(M) = 3$ (see \cite{KawasakiWPS}), this may be rewritten as 
\begin{align}
\label{eulerwps2}
\frac{1}{8\pi^2}\int_{M}\Big( |W|^2 -\frac{1}{2}|E|^2 + \frac{1}{24}R^2 \Big)dV_{g}
= \frac{1}{r} + \frac{1}{q} + \frac{1}{p}.
\end{align}

Next, on a weighted projective space, the Hirzebruch signature formula is
\begin{align}
\label{tauwps}
\tau(M)=\frac{1}{12\pi^2}\int_{M}(|W^+|^2-|W^-|^2)dV_{g}- 
4 \bigg[ s(q^{-1;r}p,r) + s(p^{-1;q}r,q) + s(r^{-1;p}q,p) \bigg].
\end{align}
Rademacher's triple reciprocity for Dedekind sums \cite{Rademacher1}
\begin{align}
s(q^{-1;r}p,r)+s(p^{-1;q}r,q)+s(r^{-1;p}q,p) = -\frac{1}{4}+\frac{1}{12}\bigg(\frac{r}{pq}+\frac{q}{pr}+\frac{p}{qr}\bigg),
\end{align}
implies that
\begin{align}
\label{tauwps2}
\tau(M)
& = \frac{1}{12\pi^2}\int_{M}(|W^+|^2-|W^-|^2)dV_{g} + 1
- \frac{1}{3}\bigg(\frac{r}{pq}+\frac{q}{pr}+\frac{p}{qr}\bigg).
\end{align}
Since $\tau(M) = 1$ (see \cite{KawasakiWPS}), this may be rewritten as 
\begin{align}
\label{tauwps3}
\frac{1}{12\pi^2}\int_{M}(|W^+|^2-|W^-|^2)dV_{g}
= \frac{1}{3}\bigg(\frac{r}{pq}+\frac{q}{pr}+\frac{p}{qr}\bigg).
\end{align}

The following argument to rule out K\"ahler-Einstein metrics for $p > 1$  
is an adaptation of the argument of 
\cite[Lemma 3]{Derdzinski} to weighted projective spaces:
\begin{theorem}
\label{noKE}
Let $M = \mathbb{CP}^2_{(r,q,p)}$. Then $M$ admits a 
K\"ahler-Einstein metric if and only if $(r,q,p) = (1,1,1)$. 
\end{theorem}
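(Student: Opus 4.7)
The plan is to adapt Derdzinski's argument \cite{Derdzinski} to the orbifold setting by pairing the two pointwise curvature identities valid at smooth points of a K\"ahler-Einstein orbifold metric---namely $E \equiv 0$ and the four-dimensional K\"ahler identity $|W^+|^2 = R^2/24$---with the two global identities \eqref{eulerwps2} and \eqref{tauwps3}, which already absorb the orbifold contributions via Rademacher's reciprocity. The goal is to turn these into a formula for $\int_M |W^-|^2\, dV_g$ that is forced to be nonpositive, from which the conclusion will follow immediately.

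First I would set $E = 0$ in \eqref{eulerwps2} and then form the linear combination $2\cdot\eqref{eulerwps2} + 3\cdot\eqref{tauwps3}$. Its left-hand side collapses to $\tfrac{1}{4\pi^2}\int_M (2|W^+|^2 + R^2/24)\, dV_g$, while its right-hand side, via the expansion $2(pq+pr+qr) + (p^2+q^2+r^2) = (p+q+r)^2$, is exactly $(p+q+r)^2/(pqr)$. Substituting the K\"ahler identity $|W^+|^2 = R^2/24$ into this then determines $\int_M R^2\, dV_g$ completely in terms of $(r,q,p)$.

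Next I would feed both $|W^+|^2 = R^2/24$ and this value of $\int_M R^2\, dV_g$ back into the signature identity \eqref{tauwps3} and solve for $\int_M |W^-|^2\, dV_g$. Using the algebraic identity $(r+q+p)^2 - 3(r^2+q^2+p^2) = -\bigl[(r-q)^2 + (q-p)^2 + (p-r)^2\bigr]$, the right-hand side will collapse to a negative universal constant times $(r-q)^2 + (q-p)^2 + (p-r)^2$ divided by $pqr$.

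The conclusion is then immediate: the left-hand side is manifestly nonnegative, while the right-hand side is nonpositive, so both must vanish. This forces $r=q=p$, and the relative primality hypothesis then yields $(r,q,p)=(1,1,1)$, whose Fubini-Study metric is K\"ahler-Einstein. I do not anticipate any serious obstacle: once the Einstein and K\"ahler hypotheses have been used to eliminate $|E|^2$ and $|W^+|^2$, the argument reduces to a single algebraic identity, and the only point requiring care is the sign bookkeeping that ensures $\int_M |W^-|^2\, dV_g$ comes out as a nonpositive multiple of a sum of squares.
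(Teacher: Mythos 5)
Your proposal is correct and is essentially the paper's own argument: both combine the orbifold Gauss--Bonnet and signature formulas \eqref{eulerwps2} and \eqref{tauwps3} with the K\"ahler identity $|W^+|^2 = R^2/24$ and the Einstein condition $E=0$ to show that $\int_M |W^-|^2\, dV_g$ equals a negative multiple of $(p-q)^2+(q-r)^2+(r-p)^2$, forcing $(r,q,p)=(1,1,1)$. The only difference is bookkeeping --- the paper forms the single combination $3\cdot\eqref{tauwps3}-\eqref{eulerkahler}$ directly rather than first solving for $\int_M R^2\, dV_g$ --- and your sign and algebraic identities all check out.
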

\begin{proof}
Any K\"ahler metric satisfies
\begin{align}
\label{wplus}
|W^+|^2 = \frac{R^2}{24}.
\end{align}
Consequently, the Gauss-Bonnet formula for any K\"ahler metric on $M$ is
\begin{align}
\label{eulerkahler}
\frac{1}{8\pi^2}\int_{M}\Big( 2|W^+|^2 + |W^-|^2 -\frac{1}{2}|E|^2 \Big)dV_{g}
= \frac{1}{r} + \frac{1}{q} + \frac{1}{p}.
\end{align}
Subtracting \eqref{eulerkahler} from $3$ times \eqref{tauwps3} yields
\begin{align}
\begin{split}
-\frac{3}{8 \pi^2} \int_{M} |W^-|^2dV_{g} + \frac{1}{16 \pi^2} \int_M |E|^2 dV_g
&= \frac{r}{pq}+\frac{q}{pr}+\frac{p}{qr} - \frac{1}{r} - \frac{1}{q} - \frac{1}{p}\\
& = \frac{1}{r q p} ( r^2 + q^2 + p^2 - pq - pr - qr)\\
& = \frac{1}{2 r q p} \big(  (p-r)^2 + (p-q)^2 + (q - r)^2 \big). 
\end{split}
\end{align}
Consequently, if $g$ is K\"ahler-Einstein, this gives a contradiction 
since the left-hand side is nonpositive and the right-hand side 
is strictly positive unless 
$(p,q,r) = (1,1,1)$ in which case the Fubini-Study metric is a 
K\"ahler-Einstein metric. 
\end{proof}
The following theorem is a generalization of the Hitchin-Thorpe 
inequality \cite{Hitchin1, Thorpe} to weighted projective spaces: 
\begin{theorem} 
\label{wpsthmbody}
If 
\begin{align}
\label{mainineqbody}
p \geq ( \sqrt{q} + \sqrt{r})^2,
\end{align}
then the weighted projective space 
$\mathbb{CP}^2_{(r,q,p)}$ does not admit {\em{any}}  Einstein metric.
\end{theorem}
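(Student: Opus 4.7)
The plan is to derive a topological obstruction by combining the orbifold Chern--Gauss--Bonnet identity \eqref{eulerwps2} with the Hirzebruch signature identity \eqref{tauwps3}, exactly in the spirit of the Hitchin--Thorpe inequality. First I would use the Einstein condition $E=0$ in \eqref{eulerwps2} to obtain
\begin{align*}
\frac{1}{8\pi^2}\int_M \Bigl(|W^+|^2 + |W^-|^2 + \tfrac{1}{24}R^2\Bigr)dV_g = \frac{1}{r} + \frac{1}{q} + \frac{1}{p}.
\end{align*}
The natural combination that eliminates $|W^+|^2$ is to subtract $\tfrac{3}{2}$ times \eqref{tauwps3}, which yields
\begin{align*}
\frac{1}{8\pi^2}\int_M \Bigl(2|W^-|^2 + \tfrac{1}{24}R^2\Bigr)dV_g = \frac{1}{r} + \frac{1}{q} + \frac{1}{p} - \frac{1}{2}\Bigl(\frac{r}{pq}+\frac{q}{pr}+\frac{p}{qr}\Bigr).
\end{align*}
The left-hand integrand is manifestly pointwise non-negative, so existence of an Einstein metric forces the right-hand side to be $\geq 0$.

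Second, I would clear denominators by multiplying through by $2pqr$ to rewrite this necessary condition as the polynomial inequality
\begin{align*}
2(qp + rp + rq) - (r^2 + q^2 + p^2) \geq 0, \qquad\text{equivalently}\qquad p^2 - 2p(q+r) + (q-r)^2 \leq 0.
\end{align*}
Viewed as a quadratic in $p$, its roots are $(\sqrt{q} \pm \sqrt{r})^2$, so the constraint is exactly $(\sqrt{q}-\sqrt{r})^2 \leq p \leq (\sqrt{q}+\sqrt{r})^2$. The lower bound is automatic from $p \geq q \geq r$, and the upper bound strictly contradicts the hypothesis whenever $p > (\sqrt{q}+\sqrt{r})^2$. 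This disposes of the strict inequality case.

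The main obstacle is the borderline case $p = (\sqrt{q}+\sqrt{r})^2$, which requires $qr$ to be a perfect square and hence, since $\gcd(q,r)=1$, both $q$ and $r$ to be perfect squares (e.g.\ $(r,q,p)=(4,9,25)$). At equality the integrand on the left must vanish identically, so any Einstein metric would have to satisfy $R \equiv 0$ and $W^- \equiv 0$, i.e.\ be a Ricci-flat anti-self-dual orbifold Einstein metric. I would attempt to rule this out by exploiting the rigidity of such metrics: a Ricci-flat ASD 4-orbifold is locally hyperk\"ahler, and the remaining identity obtained by adding rather than subtracting $\tfrac{3}{2}$ times \eqref{tauwps3} forces
\begin{align*}
\frac{1}{4\pi^2}\int_M |W^+|^2\, dV_g = \frac{1}{r}+\frac{1}{q}+\frac{1}{p}+\frac{1}{2}\Bigl(\frac{r}{pq}+\frac{q}{pr}+\frac{p}{qr}\Bigr),
\end{align*}
which I would then compare against the orbifold group structure at $[0,0,1]$ (a cyclic quotient of order $p$) to derive an incompatibility with the parallel almost complex structures a hyperk\"ahler lift must admit on a neighborhood of that point. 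I expect this equality case to be the subtlest step, and the one most likely to require an auxiliary rigidity argument beyond the Hitchin--Thorpe computation itself.
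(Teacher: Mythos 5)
Your treatment of the strict-inequality case is exactly the paper's argument: the same linear combination of \eqref{eulerwps2} and \eqref{tauwps3} (your first display is one half of the paper's identity \eqref{htwps} after setting $E=0$), the same quadratic $p^2 - 2(q+r)p + (q-r)^2 \le 0$ with roots $(\sqrt{q}\pm\sqrt{r})^2$, and the same observation that the lower root is automatic from $r \le q \le p$. Your remark that the borderline case forces $q$ and $r$ to be perfect squares is correct and a nice touch not in the paper, but it does not dispose of anything: the equality case is non-vacuous and in fact includes $(r,q,p)=(1,1,4)$, i.e.\ the first interesting example $\widehat{\mathcal{O}(-4)}$ from the introduction, as well as $(4,9,25)$ and the like.

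The genuine gap is the equality case $p = (\sqrt{q}+\sqrt{r})^2$, which you explicitly leave open. Your proposed route --- computing $\int_M |W^+|^2$ from the complementary linear combination and playing it off against the orbifold group at $[0,0,1]$ and ``parallel almost complex structures of a hyperk\"ahler lift'' --- is not an argument as stated, and it is not clear how it would close. The paper finishes with a short holonomy argument instead: at equality the Einstein metric must be Ricci-flat with $W^-\equiv 0$ (in the paper's conventions this is called \emph{self-dual}, not anti-self-dual), so the full curvature of the bundle $\Lambda^2_-$ vanishes and $\Lambda^2_-$ is flat; since $\mathbb{CP}^2_{(r,q,p)}$ is simply connected, $\Lambda^2_-$ is trivial, giving parallel anti-self-dual $2$-forms, so the holonomy reduces to $\mathrm{SU}(2)$ and the metric is K\"ahler with zero Ricci tensor. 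This contradicts Theorem \ref{noKE}, which rules out K\"ahler--Einstein metrics on $\mathbb{CP}^2_{(r,q,p)}$ for $(r,q,p)\ne(1,1,1)$ by a separate integral identity (subtracting the K\"ahler Gauss--Bonnet formula, via $|W^+|^2=R^2/24$, from three times \eqref{tauwps3}). To complete your write-up, replace your speculative final paragraph with this holonomy reduction together with an appeal to the K\"ahler--Einstein non-existence result.
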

\begin{proof}
Subtracting $3$ times \eqref{tauwps3} from $2$ times \eqref{eulerwps2} yields
\begin{align}
\label{htwps}
\frac{1}{4\pi^2} \int_M \Big(  2 |W^-|^2 - \frac{1}{2}|E|^2 + \frac{1}{24}R^2 \Big)
dV_g 
= \frac{2}{r} + \frac{2}{q} + \frac{2}{p} - \frac{r}{pq} -\frac{q}{pr} - \frac{p}{qr},
\end{align}
for any metric $g$.  Next, assume that $g$ is an Einstein 
metric on $M = \mathbb{CP}^2_{(r,q,p)}$.
Then \eqref{htwps} yields the inequality 
\begin{align}
\frac{r}{pq}+\frac{q}{pr}+\frac{p}{qr} \leq  \frac{2}{r} + \frac{2}{q} + \frac{2}{p},
\end{align}
whereupon multiplication by $pqr$ results in the inequality
\begin{align}
r^2 + q^2 + p^2 \leq 2 ( pq + pr + qr),
\end{align}
which is rewritten as 
\begin{align}
\label{qp}
p^2 - 2 (q + r) p + (q-r)^2 \leq 0. 
\end{align}
For fixed $q$ and $r$, consider the left-hand side of the 
above equation as a quadratic polynomial in $p$. 
By the quadratic formula, the roots are
\begin{align}
p_{\pm} = q + r \pm 2 \sqrt{qr}, 
\end{align}
Clearly then, the inequality in \eqref{qp} is satisfied if
\begin{align}
p_- = ( \sqrt{q} - \sqrt{r})^2 \leq p \leq ( \sqrt{q} + \sqrt{r})^2 = p_+.
\end{align}
Since $1 \leq r \leq q \leq p$, it follows that
\begin{align}
p_- = q + r - 2 \sqrt{qr} \leq q + r - 2 r = q - r < q,
\end{align}
so the lower inequality is already satisfied. Consequently,  the only requirement
is that
\begin{align}
  p \leq ( \sqrt{q} + \sqrt{r})^2 = p_+.
\end{align}
In the case of equality $p = p_+$, from \eqref{htwps}, 
the metric must be Ricci-flat and self-dual, so 
the bundle $\Lambda^2_-$ is flat. 
Since $\mathbb{CP}^2_{(r,q,p)}$  is simply connected,
$\Lambda^2_-$ must be trivial, and 
the holonomy reduces to ${\rm{SU}}(2)$. The metric 
is therefore K\"ahler with zero Ricci tensor, which 
contradicts Theorem~\ref{noKE}. 
\end{proof}
Theorem \ref{wpsthm} immediately follows from Theorems \ref{noKE} and \ref{wpsthmbody}.

\section{Orbifold Yamabe invariants}
\label{YI}
The following Proposition is a restatement of Theorem~\ref{t5}, and 
immediately implies 
the upper estimate on the orbifold Yamabe invariant in Theorem~\ref{t4}.
The proof is based on the idea of Gursky-LeBrun \cite{GurskyLeBrun} adapted to 
orbifolds by Akutagawa-Botvinnik \cite{AB2}.
\begin{proposition}
\label{gthm}
 If $g$ is any Riemannian metric on $M = \mathbb{CP}^2_{(r,q,p)}$,
then 
\begin{align}
\label{c1}
Y_{\mathrm{orb}}(M, [g])\leq  4 \pi \sqrt{2} \frac{(r + q + p)}{\sqrt{rqp}}.
\end{align}
Furthermore, if $p > 1$, then strict inequality holds in \eqref{c1}. 
\end{proposition}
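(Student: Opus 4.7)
The strategy is to adapt the Gursky--LeBrun method to the orbifold setting, following Akutagawa--Botvinnik. If $Y_{\mathrm{orb}}(M,[g]) \leq 0$, the inequality is trivial, so assume $Y := Y_{\mathrm{orb}}(M,[g]) > 0$ and let $g_Y \in [g]$ be a unit-volume orbifold Yamabe minimizer (existence to be handled separately below). Then $R_{g_Y} \equiv Y$ and $\int R_{g_Y}^2\, dV_{g_Y} = Y^2$.

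The central analytic ingredient, due to Gursky in the smooth setting and transferred to orbifolds by Akutagawa--Botvinnik, is the estimate
\begin{align*}
24 \int_M |W^+_{g_Y}|^2 \, dV_{g_Y} \;\geq\; Y^2 + 6 \int_M |E_{g_Y}|^2\, dV_{g_Y},
\end{align*}
with equality forcing $E_{g_Y}\equiv 0$ and $|W^+_{g_Y}|^2 \equiv R_{g_Y}^2/24$ pointwise. Its proof proceeds via nonnegativity of the first eigenvalue of the twisted Yamabe operator $-6\Delta_g + R_g - 2\sqrt{6}|W^+_g|$, whose sign is a conformal invariant, combined with a Bochner--Weitzenb\"ock identity applied to a harmonic self-dual $2$-form, which exists since the orbifold $b_+$ of $\CP^2_{(r,q,p)}$ is at least one.

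To conclude, forming $2\cdot\eqref{eulerwps2} + 3\cdot\eqref{tauwps3}$ and applying the algebraic identity $2(\tfrac1r+\tfrac1q+\tfrac1p) + (\tfrac{r}{pq}+\tfrac{q}{pr}+\tfrac{p}{qr}) = (r+q+p)^2/(rqp)$ (obtained by clearing denominators) yields
\begin{align*}
\int_M \Bigl(4|W^+|^2 - |E|^2 + \tfrac{1}{12}R^2\Bigr)\, dV_{g_Y} \;=\; 8\pi^2 \cdot \frac{(r+q+p)^2}{rqp}.
\end{align*}
Substituting the Gursky estimate in the form $4\int|W^+|^2 \geq \tfrac{Y^2}{6} + \int|E|^2$ and using $\int R^2\,dV = Y^2$ produces
\begin{align*}
8\pi^2 \cdot \frac{(r+q+p)^2}{rqp} \;\geq\; \tfrac{Y^2}{6} + \tfrac{Y^2}{12} \;=\; \tfrac{Y^2}{4},
\end{align*}
which rearranges to \eqref{c1}. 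For the strict inequality when $p > 1$: equality in \eqref{c1} propagates to equality in the Gursky estimate, whence $g_Y$ is Einstein and, by Derdzinski's theorem~\cite{Derdzinski}, also K\"ahler, hence K\"ahler--Einstein. Theorem~\ref{noKE} forbids this when $p > 1$. In the degenerate case where no Yamabe minimizer exists in $[g]$, the Akutagawa--Botvinnik concentration analysis yields $Y = 8\pi\sqrt{6}/\sqrt{|\Gamma|}$ for some orbifold group $\Gamma$ at a singular point, and a direct calculation shows this is strictly less than $4\pi\sqrt{2}(r+q+p)/\sqrt{rqp}$ whenever $p > 1$.

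The principal obstacle is the orbifold Gursky estimate itself. The smooth Bochner--Weitzenb\"ock argument on self-dual $2$-forms must be carried through the isolated orbifold singularities, requiring orbifold Hodge theory on $\Lambda^2_+$, a correct definition of the orbifold $b_+$, and a justification for integration by parts against eigensections that lift smoothly to the orbifold charts. This technical package constitutes the substance of~\cite{AB2}, and the ``slightly simplified proof'' alluded to in the introduction presumably streamlines it for the present setting of cyclic quotient singularities.
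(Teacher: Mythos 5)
Your argument does not follow the paper's route (which is the Gursky--LeBrun $\mathrm{Spin}^c$ method: Fukumoto's index theorem produces a harmonic spinor for the anti-canonical $\mathrm{Spin}^c$ structure, and the Lichnerowicz--Weitzenb\"ock formula with the Kato inequality gives $Y_{\mathrm{orb}}(M,[g])\leq 2\sqrt{2}\,(\int|F^+|^2)^{1/2}=4\pi\sqrt{2}\,(c_1^2)^{1/2}$, using $b_-=0$ so that the harmonic representative of $c_1$ is self-dual). More importantly, your route has a genuine gap: the ``Gursky estimate'' you rely on,
\begin{align*}
24\int_M|W^+|^2\,dV \;\geq\; Y^2+6\int_M|E|^2\,dV,
\end{align*}
is not Gursky's theorem and is in fact false. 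Gursky's estimate (for $b_+>0$, $Y\geq 0$) is $\int|W^+|^2\geq Y^2/24$, with no $|E|^2$ term. Your strengthened version fails already on $S^2(1)\times S^2(\epsilon)$ with $\epsilon$ small: the metric is K\"ahler, so $24\int|W^+|^2=\int R^2\sim 64\pi^2\epsilon^{-2}$, while $6\int|E|^2\sim 96\pi^2\epsilon^{-2}$, so the right-hand side exceeds the left. The $|E|^2$ term is not decorative in your bookkeeping: the combination $2\cdot\eqref{eulerwps2}+3\cdot\eqref{tauwps3}$ carries a $-\int|E|^2$ with the wrong sign for an upper bound on $Y$, and you inserted the $+6\int|E|^2$ into the Gursky estimate precisely to cancel it. With the correct Gursky inequality you only obtain $Y^2\leq 32\pi^2(r+q+p)^2/(rqp)+4\int|E|^2$, which does not close. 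This sign problem is exactly what the $\mathrm{Spin}^c$ argument avoids, since it bounds the Yamabe quotient of the test function $u=|\psi|^{2/3}$ directly by the $L^2$ norm of the self-dual harmonic form representing $2\pi c_1(L)$, whose square integral is computed topologically as $(r+q+p)^2/(rqp)$ without decomposing into $|W^\pm|^2$, $|E|^2$, $R^2$ with mixed signs. A further advantage is that no Yamabe minimizer is needed for the inequality (the Yamabe constant is an infimum, so any test function suffices), whereas your argument requires one and must invoke a separate, unproved concentration dichotomy when it fails to exist.

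Two smaller points. In the equality analysis, Derdzinski's theorem does not say that an Einstein metric is K\"ahler; the paper's equality case instead forces the harmonic spinor to be parallel and invokes Moroianu's theorem to conclude the metric is K\"ahler, and then a separate Gauss--Bonnet/signature computation shows it is also Einstein, contradicting Theorem \ref{noKE} for $p>1$. Your algebraic identity $2(\tfrac1r+\tfrac1q+\tfrac1p)+(\tfrac{r}{pq}+\tfrac{q}{pr}+\tfrac{p}{qr})=(r+q+p)^2/(rqp)$ and the resulting integral identity are correct and do appear in the paper, but only as the final step of computing $\int_M c_1(L)^2$.
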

\begin{proof}
First, one may assume that $g$ has positive scalar 
curvature. Let $L$ be the $\mathrm{Spin}^c$ structure associated to the almost 
complex structure $J$ on $M$, and let $D$ denote the Dirac operator:
\begin{align}
D:\Gamma(S^+)\rightarrow \Gamma(S^+).
\end{align}
From \cite[Theorem 2]{Fukumoto}, it follows that $\mathrm{Ind}(D) = 1$. 
Therefore, there exists a positive harmonic spinor $\psi\ne 0$.
By the Lichnerowicz-Bochner formula,
\begin{align}\label{boc}
\nabla^*\nabla\psi+\frac{R}{4}\psi+\frac{1}{2}F^+\cdot \psi = 0,
\end{align}
where $F$ is the curvature form of the line bundle and one chooses the
connection such that $F$ is a harmonic $2$-form. Pairing
this with $\psi$ and using the Kato inequality
\begin{align}
|\nabla\psi|^2\geq \frac{4}{3}|\nabla|\psi||^2,
\end{align}
yield
\begin{align}
\frac{1}{2}\Delta |\psi|^2\geq \frac{4}{3}|\nabla |\psi||^2
           +\frac{R}{4}|\psi|^2+\frac{1}{2}\langle F^+\cdot \psi,\psi\rangle.
\end{align}
It follows from the Cauchy-Schwarz inequality $|\langle F^+\cdot \psi,\psi\rangle|\leq \sqrt{2}|F^+||\psi|^2$ that
\begin{align}
|\psi|\Delta|\psi|\geq \frac{1}{3}|\nabla|\psi||^2+\frac{R}{4}|\psi|^2
  -\frac{\sqrt{2}}{2}|F^+||\psi|^2.
\end{align}
Letting $u=|\psi|^{2/3}$, it follows that
\begin{align}
-\Delta u+\frac{R}{6}u\leq \frac{\sqrt{2}}{3}|F^+|u.
\end{align}
Multiplying by $6 \cdot u$ and integrating by parts, 
\begin{align}
\frac{\int (6 |\nabla u|^2+ R u^2) dV}{(\int u^4 dV)^{1/2}}
\leq 2\sqrt{2}\big(\int |F^+|^2 dV\big)^{1/2}.
\end{align}
Since $b^2_-= 0$, $c_1(L)=\frac{\sqrt{-1}}{2\pi}F$,  which yields
\begin{align}
Y_{\mathrm{orb}}(M, [g])\leq 4 \pi \sqrt{2} \big( \int_M c_1(L)^2\big)^{1/2}.
\end{align}
Since $L$ is the anti-canonical bundle, the first Chern class 
satisfies $c_1(L) = c_1 (M)$, and
from elementary complex geometry, $p_1(M) = c_1(M)^2 - 2 c_2(M)$. 
By Chern-Weil theory and \eqref{eulerwps2} and \eqref{tauwps3} above, 
\begin{align*}
\begin{split}
\int_M c_1(L)^2 &= \frac{3}{12\pi^2}\int_M ( |W^+|^2 - |W^-|^2)dV_g + \frac{2}{8 \pi^2} 
\int_M \left( |W|^2 - \frac{1}{2} |E|^2 + \frac{1}{24}R^2 \right) dV_g\\
& = \frac{r}{pq}+\frac{q}{pr}+\frac{p}{qr} + \frac{2}{r} + \frac{2}{q} + \frac{2}{p} 
= \frac{(r + p + q)^2}{rpq}, 
\end{split}
\end{align*}
and \eqref{c1} follows. 

 If equality held in \eqref{c1}, then the function 
$u$ in the above argument must be a minimizer of the Yamabe energy, 
so it satisfies the elliptic PDE $-6 \Delta u+ R u=cu^3$ where $c>0$
is a positive constant. By elliptic regularity and the Harnack inequality,
$u$ is a smooth positive function. The metric $g'=u^2g$ has constant
scalar curvature and $\psi'=u^{-3/2}\psi=\psi/|\psi|$ is a $g'$-harmonic spinor
\cite[Theorem 5.24]{LM}. Replacing $g$ and $\psi$ by 
$g'$ and $\psi'$  in the above proof, one may then assume
that $\psi$ is a unit spinor and $g$ has constant scalar curvature.
In the above argument,  all the inequlities used must be equalities. 
In particular $|F^+|= (\sqrt{2}/4)R$ 
and $\langle F^+\cdot \psi,\psi\rangle=-\sqrt{2}|F^+|$. 
Therefore $F^+\cdot\psi=-\sqrt{2}|F^+|\psi$.  
The equation (\ref{boc}) then implies that $\psi$ is parallel,
which implies that $g$ is K\"ahler \cite[Theorem 1.1]{Moroianu}.

Addding $2$ times \eqref{eulerwps2} with $3$ times \eqref{tauwps3} yields
\begin{align}
\frac{1}{4\pi^2}  \int_M  \Big(  2 |W^+|^2 - \frac{1}{2} |E|^2 + \frac{1}{24} R^2 \Big)
dV_g = \frac{ (r + q + p)^2}{r q p}. 
\end{align}
Since $g$ is K\"ahler, using \eqref{wplus}, it follows that 
\begin{align}
\frac{1}{32 \pi^2}  \int_M R^2 dV_g = \frac{ (r + q + p)^2}{r q p} + \frac{1}{ 8 \pi^2} 
\int_M |E|^2 dV_g.
\end{align}
This implies that $g$ is also Einstein, since $g$ attains the maximal value
of the Yamabe energy in \eqref{c1}. 
Thus $g$ is K\"ahler-Einstein, and this contradicts Theorem \ref{noKE},
unless $p =1$. 
\end{proof}

The next lemma will be used in both the proofs of Theorems \ref{t4}
and \ref{crithm}. 
\begin{lemma} 
\label{noEinstein}
Let $g$ be the canonical Bochner-K\"ahler 
metric on $\mathbb{CP}^2_{(r,q,p)}$. If $p \geq r + q$ then 
there is no Einstein metric in the conformal class of $g$. 
\end{lemma}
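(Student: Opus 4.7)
The plan is to derive a contradiction by using Derdzinski's correspondence to force any Einstein representative of $[g]$ to coincide, up to a positive constant, with the Derdzinski metric $R_g^{-2}\,g$. Since by \cite[(2.32)]{DavidGauduchon} (as recalled in the introduction) the scalar curvature $R_g$ of the canonical Bochner-K\"ahler metric vanishes somewhere on $M$ as soon as $p \geq r+q$, this formula cannot define a smooth orbifold metric on all of $M$, yielding the desired obstruction.

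Assume for contradiction that $\tilde g = v^2 g$ is a smooth orbifold Einstein metric with $v>0$. Since $g$ is self-dual K\"ahler (Bochner-K\"ahler in four dimensions), and since $W^{\pm}$ as $(1,3)$-tensors are conformally invariant, $\tilde g$ is itself self-dual Einstein. Applying \eqref{tauwps3} to $\tilde g$ with $W^-_{\tilde g}=0$ gives
\[
\int_M |W^+_{\tilde g}|^2\, dV_{\tilde g} \;=\; 4\pi^2\Bigl(\tfrac{r}{pq}+\tfrac{q}{pr}+\tfrac{p}{qr}\Bigr)\;>\;0,
\]
so $W^+_{\tilde g}$ is not identically zero. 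By Derdzinski's theorem \cite{Derdzinski}, on $U := \{W^+_{\tilde g}\neq 0\}$ the conformally rescaled metric $\hat g := |W^+_{\tilde g}|^{2/3}\,\tilde g$ is K\"ahler, and being conformal to the self-dual $\tilde g$ it is in fact Bochner-K\"ahler on $U$.

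The uniqueness of the Bochner-K\"ahler metric on $\mathbb{CP}^2_{(r,q,p)}$ \cite[Appendix D]{DavidGauduchon} then forces $\hat g = c\,g$ on $U$ for some constant $c>0$: the biholomorphic isometry between $\hat g$ and a rescaling of $g$ supplied by the uniqueness statement must reduce to a pure rescaling because $\hat g$ and $g$ already lie in the same conformal class. Consequently $|W^+_{\tilde g}|^{2/3} v^2 \equiv c$ on $U$, $W^+_{\tilde g}$ is nowhere zero, and $U = M$. Conformal invariance of the zero locus of $W^+$ then transfers this to $W^+_g$, and the K\"ahler identity $|W^+_g|^2 = R_g^2/24$ forces $R_g$ to be nowhere vanishing on $M$, contradicting the hypothesis $p\geq r+q$.

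The most delicate point is the reduction of the David--Gauduchon isometry to a genuine constant rescaling; this is really a rigidity statement about pairs of Bochner-K\"ahler representatives of a common conformal class. Derdzinski's correspondence itself is local and tensorial, so it passes to the orbifold setting at the cyclic-quotient singular points without change by pulling back to smooth covers.
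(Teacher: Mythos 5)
Your overall strategy here is genuinely different from the paper's: the paper plays the hypothetical Einstein representative off against the Derdzinski metric $R_g^{-2}g$, which is a \emph{complete} Einstein metric on $M\setminus\{R_g=0\}$ (or on the two components $U_\pm$), obtains a nonconstant concircular scalar field, and rules out all cases of Tashiro's classification \cite{Tashiro} by the topology of weighted projective spaces and non-local-conformal-flatness. Your skeleton --- force any conformal Einstein metric to have nowhere-vanishing $W^+$ and contradict the vanishing of $R_g$ when $p\geq r+q$ --- is a plausible alternative, but the step you yourself flag as delicate is a genuine gap as written. The David--Gauduchon uniqueness theorem classifies \emph{global} Bochner-K\"ahler orbifold metrics on the compact weighted projective space, whereas your metric $\hat g=|W^+_{\tilde g}|^{2/3}\tilde g$ is defined only on the open set $U=\{W^+_{\tilde g}\neq 0\}=\{R_g\neq 0\}$, which under the hypothesis $p\geq r+q$ is a priori a proper open subset carrying an incomplete metric; indeed $U\neq M$ is exactly the situation you are trying to exclude. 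The theorem therefore supplies no isometry at all at the point where you invoke it, and the argument becomes circular: you need $U=M$ to apply global uniqueness, and $U=M$ is what you are deriving from it. (Even in the global setting, reducing the abstract isometry to the identity ``because the metrics are conformal'' would need a separate argument.)

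The gap is repairable, but by a different and more elementary rigidity statement, essentially contained in \cite{Derdzinski}: on a connected open set where $W^+\neq 0$, two conformal K\"ahler metrics inducing the same orientation differ by a constant. For a K\"ahler metric, $W^+$ acts on $\Lambda^+$ with spectrum $(R/6,-R/12,-R/12)$ and the K\"ahler form spans the simple eigenspace; since the eigenspaces of $W^+\colon\Lambda^+\to\Lambda^+$ are conformally invariant, the K\"ahler forms of $g$ and $\hat g=fg$ are pointwise proportional on $U$, say $\hat\omega=\pm f\,\omega$, and closedness of both gives $df\wedge\omega=0$, hence $f$ locally constant. Substituting this for the appeal to \cite{DavidGauduchon}, your continuity argument at $\partial U$ (where $|W^+_{\tilde g}|^{2/3}v^2$ would have to be both $0$ and the positive constant $c$) does produce the contradiction. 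Two smaller points: Derdzinski's theorem producing the K\"ahler rescaling requires $W^+$ to be \emph{degenerate} together with $\delta W^+=0$, not merely $W^-=0$; here degeneracy holds because it is conformally invariant and $g$ is K\"ahler, but this should be said. And the constant $c$ may differ on different components of $U$, which is harmless since the boundary contradiction is obtained on a single component.
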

\begin{proof}
To begin, it is shown in \cite[(2.32)]{DavidGauduchon} that
with the scaling so that
\begin{align}
\mathrm{Vol}(g) = \frac{\pi^2}{2} \frac{1}{pqr},
\end{align}
the scalar curvature of $g$ is given by 
\begin{align}
\label{rgf}
R_g = 24 \big( r ( -r + q + p) |u_1|^2 + q ( r - q + p) |u_2|^2 + p ( r + q - p) |u_3|^2 \big),
\end{align}
where $(u_1,u_2, u_3)$ are coordinates on the Sasakian sphere $S^5 \subset \CC^3$. 
Consequently, in the case $p = q + r$, 
\begin{align}
\label{scal}
R_g =  48 rq ( |u_1|^2 + |u_2|^2),
\end{align} 
which is positive except at the single point $[0,0,1]$ (the 
orbifold point with group of order $p$). The metric $\tilde{g} = R_g^{-2} g $ is 
Ricci-flat. 

Since there are two Einstein metrics in the conformal class, the complete 
manifold $(M \setminus [0,0,1], \tilde{g})$ admits a nonconstant solution of the equation
\begin{align}
\nabla^2 \phi = \frac{\Delta \phi}{m} \tilde{g},
\end{align} 
which is called a concircular scalar field, 
and complete manifolds which admit a non-zero solution were
classified by Tashiro \cite{Tashiro} (see also \cite{Kuhnel}),
who showed that $(X,g)$ must be conformal to one of the following:
\begin{itemize}
\item (A) A direct product $V \times J$, where $V$ is an 
$(m-1)$-dimensional complete Riemannian manifold
and $J$ is an interval,
\item (B) Hyperbolic space $\mathcal{H}^m$, 
\item (C) the round sphere $S^m$.
\end{itemize}

 If $M \setminus [0,0,1]$ were diffeomorphic to a product, 
then any element in $H_2(M)$ would have zero self-intersection. 
However, from the determination of the cohomology ring 
of weighted projective spaces in \cite{KawasakiWPS}, this cannot happen, 
so case (A) is ruled out. 
Cases (B) and (C) cannot happen since $g$ is obviously not locally 
conformally flat. This is a contradiction, and the nonexistence is proved.

In the case $p > q + r$, from $\eqref{rgf}$, the scalar curvature vanishes 
along a hypersurface which divides $M$ into two components $U_+$ and 
$U_-$, with $U_-$ containing the orbifold point $[0,0,1]$ and 
$U_+$ containing the other two orbifold points $[1,0,0]$ and $[0,1,0]$. 
On $U_{\pm}$, the metric $\tilde{g} = R_g^{-2} g$ is 
complete Einstein with negative Einstein constant. 
If there were an Einstein 
metric in the conformal class of $g$, then $U_{\pm}$ would admit a 
concircular scalar field, and the same argument above rules out this 
possibility.
\end{proof}

Next, the lower estimate in Theorem \ref{t4} is given by the following.
\begin{proposition}
\label{oyi2}
 Let $g$ be the canonical Bochner-K\"ahler 
metric on $\mathbb{CP}^2_{(r,q,p)}$. If
\begin{align}
\label{in32}
p < ( \sqrt{r} + \sqrt{q})^2, 
\end{align}
then 
\begin{align}
\label{coyf2}
Y_{\mathrm{orb}}( \mathbb{CP}^2_{(r,q,p)}, [g]) \geq 4 \pi \sqrt{6} 
\sqrt{ \frac{2}{r} + \frac{2}{q} + \frac{2}{p}
- \frac{r}{pq} -\frac{q}{pr}- \frac{p}{qr}} \ .
\end{align}
Furthermore, if $r + q \leq p < ( \sqrt{r} + \sqrt{q})^2$
then strict inequality holds in \eqref{coyf2}. 

If $p < r +q$ then
\begin{align}
\label{coyf2e}
Y_{\mathrm{orb}}( \mathbb{CP}^2_{(r,q,p)}, [g]) = 4 \pi \sqrt{6} 
\sqrt{ \frac{2}{r} + \frac{2}{q} + \frac{2}{p}
- \frac{r}{pq} -\frac{q}{pr}- \frac{p}{qr}} \ .
\end{align}
\end{proposition}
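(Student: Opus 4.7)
The plan is to combine the orbifold Hitchin-Thorpe identity \eqref{htwps} with the pointwise vanishing of $W^-$ throughout the conformal class $[g]$, and then apply the resulting identity to a Yamabe minimizer.

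Since the canonical Bochner-K\"ahler metric is self-dual K\"ahler in real dimension four, $W^-_g \equiv 0$. Because $|W^-|^2\,dV$ is a pointwise conformal invariant in dimension four, $W^-_{\tilde g} \equiv 0$ for every $\tilde g \in [g]$. Substituting this vanishing into \eqref{htwps} yields, for every $\tilde g \in [g]$,
\begin{align*}
\int_M R_{\tilde g}^2 \, dV_{\tilde g} \;=\; 96 \pi^2 K \;+\; 12 \int_M |E_{\tilde g}|^2 \, dV_{\tilde g} \;\geq\; 96 \pi^2 K,
\end{align*}
where $K := \frac{2}{r} + \frac{2}{q} + \frac{2}{p} - \frac{r}{pq} - \frac{q}{pr} - \frac{p}{qr}$, with equality if and only if $\tilde g$ is Einstein. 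The quadratic analysis performed in the proof of Theorem~\ref{wpsthmbody} shows that the hypothesis \eqref{in32} is exactly equivalent to $K > 0$.

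Next one argues that $Y := Y_{\mathrm{orb}}(M, [g]) > 0$. When $p < r + q$ this is immediate, since $R_g > 0$ pointwise by \eqref{rgf}. In the remaining range $r + q \leq p < (\sqrt r + \sqrt q)^2$, positivity is handled via the orbifold Yamabe theory of \cite{AB2}. Once $Y > 0$, the orbifold Yamabe problem admits a minimizer $g_0 \in [g]$ with $R_{g_0}$ a positive constant, so that
\[
Y^2 \;=\; R_{g_0}^2 \, \mathrm{Vol}(g_0) \;=\; \int_M R_{g_0}^2 \, dV_{g_0} \;\geq\; 96 \pi^2 K,
\]
and consequently $Y \geq 4 \pi \sqrt{6} \sqrt{K}$, which is \eqref{coyf2}.

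For the equality versus strict inequality dichotomy, equality in the chain above forces $E_{g_0} \equiv 0$, i.e., $g_0$ is Einstein. If $p < r + q$, Derdzinski's metric $\tilde g = R_g^{-2} g$ is a smooth orbifold Einstein metric of positive scalar curvature in $[g]$; the orbifold version of Obata's theorem identifies it, up to scaling, as the unique constant-scalar-curvature representative of $[g]$, hence as the Yamabe minimizer, yielding the equality \eqref{coyf2e}. If $r + q \leq p < (\sqrt r + \sqrt q)^2$, Lemma~\ref{noEinstein} precludes any Einstein metric in $[g]$, so $g_0$ cannot be Einstein and strict inequality must hold in \eqref{coyf2} (the borderline $p = r+q$, where the minimizer itself fails to exist, is pinned down separately by Theorem~\ref{crithm}). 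The most delicate step is verifying positivity of $Y_{\mathrm{orb}}(M,[g])$ in the middle range $r + q \leq p$, where $g$ itself is not of positive scalar curvature pointwise; once that is in hand the proof reduces cleanly to the Hitchin-Thorpe identity and standard Obata-type rigidity.
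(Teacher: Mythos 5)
Your overall strategy is the same as the paper's: conformal invariance of $W^-\equiv 0$, the resulting identity $\int_M R_{\tilde g}^2\,dV_{\tilde g} = 96\pi^2 K + 12\int_M|E_{\tilde g}|^2\,dV_{\tilde g}$ extracted from \eqref{htwps}, Obata's theorem for the range $p<r+q$, and Lemma~\ref{noEinstein} for the rigidity statement. However, there is one genuine gap: the assertion that ``once $Y>0$, the orbifold Yamabe problem admits a minimizer $g_0\in[g]$ with $R_{g_0}$ a positive constant.'' This is false in the orbifold setting, and it is precisely the difficulty the paper is organized around: positivity of the conformal Yamabe invariant does not guarantee a minimizer; one needs the strict Aubin-type inequality $Y_{\mathrm{orb}}(M,[g]) < 8\pi\sqrt{6}/\sqrt{p}$ of \cite[Theorem 5.2]{AB1} or \cite[Theorem 3.1]{Akutagawacoyi}. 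Indeed, at $p=r+q$ Theorem~\ref{crithm} shows there is \emph{no} constant scalar curvature metric in $[g]$ --- a fact you yourself concede in your closing parenthetical, which contradicts your blanket existence claim. The paper's way around this is a proof by contradiction: assume $Y_{\mathrm{orb}}(M,[g]) < 4\pi\sqrt{6}\sqrt{K}$; check that $K\le 4/p$ always, since $Kpqr - 4qr = -(p-(q+r))^2\le 0$, so that $4\pi\sqrt{6}\sqrt{K}\le 8\pi\sqrt{6}/\sqrt{p}$ and the contradiction hypothesis places $Y$ strictly below the existence threshold; conclude that a minimizer exists, apply your inequality to it to get $Y\ge 4\pi\sqrt{6}\sqrt{K}$, and derive a contradiction. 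Without this detour (or an equivalent one), the step from $Y>0$ to the existence of $g_0$ is unjustified.

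A secondary point: your strict-inequality argument for $r+q\le p$ also presupposes the minimizer $g_0$. For $p>r+q$ it can be repaired, since then $4\pi\sqrt{6}\sqrt{K}<8\pi\sqrt{6}/\sqrt{p}$ strictly, so equality in \eqref{coyf2} would place $Y$ below the threshold, a minimizer would exist, and Lemma~\ref{noEinstein} would rule out the forced Einstein condition. At the borderline $p=r+q$ one has $K=4/p$, so the claimed strict lower bound coincides with the elementary upper bound \eqref{elemest}; this case genuinely cannot be handled by the minimizer argument, and deferring to the Obata-type analysis of Theorem~\ref{crithm}, as you do, is the right instinct.
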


\begin{proof}
Since $W^-(g) = 0$, any metric $\hat{g}$ conformal to 
$g$ also satisfies $W^-(\hat{g}) = 0$. 
Formula \eqref{htwps} above becomes 
\begin{align}
\label{htwps2}
\frac{1}{4\pi^2} \int_M \Big(  - \frac{1}{2}|E|^2 + \frac{1}{24}R^2 \Big)
dV_{\hat{g}} 
= \frac{2}{r} + \frac{2}{q} + \frac{2}{p} - \frac{r}{pq} -\frac{q}{pr} - \frac{p}{qr},
\end{align}
for any metric $\hat{g}$ in the conformal class of $g$. 
If $p < r + q$, the Bochner-K\"ahler 
metric $g$ on $\mathbb{CP}^2_{(r,q,p)}$ is conformal to a positive self-dual 
Einstein metric. Using the fact that an Einstein metric achieves
the Yamabe invariant in its conformal class \cite{Obata}, 
the equality in \eqref{coyf2e} follows.

Next, consider the case 
\begin{align}
\label{case}
r + q \leq p < ( \sqrt{r} + \sqrt{q})^2.
\end{align}
Rewriting \eqref{htwps2}, 
\begin{align}
\begin{split}\label{lyes}
\frac{1}{4\cdot 24 \cdot \pi^2} 
\int_{M} R^2 dV_{\hat{g}}
& =  \frac{2}{r} + \frac{2}{q} + \frac{2}{p}
- \frac{r}{pq} -\frac{q}{pr}- \frac{p}{qr} + \frac{1}{8\pi^2} 
\int_{M}  |E|^2 dV_{\hat{g}}\\
&\geq  \frac{2}{r} + \frac{2}{q} + \frac{2}{p}
- \frac{r}{pq} -\frac{q}{pr}- \frac{p}{qr}.
\end{split}
\end{align}
Note the important fact that 
\begin{align}
 \frac{2}{r} + \frac{2}{q} + \frac{2}{p}
- \frac{r}{pq} -\frac{q}{pr}- \frac{p}{qr} > 0,
\end{align}
precisely when $p < ( \sqrt{r} + \sqrt{q})^2$, this was the inequality 
above in the proof of Theorem~\ref{wpsthm}. 
Furthermore, 
the orbifold conformal Yamabe invariant of $[g]$ is positive;
this follows from \cite[equation (2.37)]{DavidGauduchon} which implies that 
\begin{align}
\mathrm{Vol}(g)^{-1/2}\int_M R_g dV_g= 4 \pi \sqrt{2} \frac{r + q + p}{\sqrt{rqp}} > 0, 
\end{align}
together with \cite[Lemma 3.4]{AB2}.
In contrast to the case of smooth manifolds, one is not assured that 
there is a solution to the orbifold Yamabe problem. 
So to proceed, assume by contradiction that 
\begin{align}
\label{conti}
Y_{\mathrm{orb}}(M,[g]) <  4 \pi \sqrt{6} \sqrt{ \frac{2}{r} + \frac{2}{q} + \frac{2}{p}
- \frac{r}{pq} -\frac{q}{pr}- \frac{p}{qr}}.
\end{align}
If $p < ( \sqrt{r} + \sqrt{q})^2$, 
then the inequality
\begin{align}
\label{nineq}
 4 \pi \sqrt{6} \sqrt{ \frac{2}{r} + \frac{2}{q} + \frac{2}{p}
- \frac{r}{pq} -\frac{q}{pr}- \frac{p}{qr}} < \frac{8 \pi \sqrt{6}}{\sqrt{p}}. 
\end{align}
is satisfied. To see this,  
squaring both sides of \eqref{nineq} results in 
\begin{align}
 \frac{2}{r} + \frac{2}{q} + \frac{2}{p}
- \frac{r}{pq} -\frac{q}{pr}- \frac{p}{qr} < \frac{4}{p}.
\end{align}
Multiplying by $pqr$, and rearranging, this inequality is equivalent to 
\begin{align}
r^2 + q^2 + p^2 - 2 pq - 2 pr + 2qr > 0.
\end{align}
But the left-hand side is a perfect square, 
\begin{align}
r^2 + q^2 + p^2 - 2 pq - 2 pr + 2qr = (p - (r+q))^2
\end{align}
which is strictly positive since $p < r + q$. 

Therefore, by \cite[Theorem 5.2]{AB1} or \cite[Theorem 3.1]{Akutagawacoyi}, 
there exists a solution to the orbifold Yamabe problem which has
constant scalar curvature. Choosing $\hat{g}$ to be this 
Yamabe minimizer, the inequality \eqref{lyes} is then 
\begin{align}
\frac{1}{4\cdot 24 \cdot \pi^2} (Y_{\mathrm{orb}}(M, [g]))^2 \geq  \frac{2}{r} + \frac{2}{q} + \frac{2}{p}
- \frac{r}{pq} -\frac{q}{pr}- \frac{p}{qr},
\end{align}
which contradicts \eqref{conti} and therefore \eqref{coyf2} holds.  

Finally, if equality holds in the inequality \eqref{lyes}, then 
$\hat{g}$ is Einstein. But Lemma~\ref{noEinstein} says 
there is no global Einstein metric in the conformal class of $g$ for $p \geq r + q$,
so strict inequality must hold in \eqref{coyf2} when $p \geq r + q$. 
\end{proof}
\begin{proof}[Proof of Theorem $\ref{t4}$]
This clearly follows from Propositions \ref{gthm} and \ref{oyi2}.
\end{proof}
\begin{proof}[Proof of Theorem $\ref{ulthm}$]
The inequality
\begin{align}
 4 \pi \sqrt{2} \frac{(r + q + p)}{\sqrt{rqp}} < \frac{8 \pi \sqrt{6}}{\sqrt{p}}
\end{align}
is equivalent to 
\begin{align}
(r + q + p)^2 < 12 rq.
\end{align}
Rewrite this 
\begin{align}
\label{xyineq}
(x + y + 1)^2 < 12 x y, 
\end{align}
where $x =r/p$ and $y = q/p$. 
Since $1 \leq r \leq q \leq p$, one must  
determine the region where the inequality \eqref{xyineq} is 
satisfied in the triangle $V = ([0,1] \times [0,1]) \cap \{ y \geq x\}$. 
The level set $(x + y + 1)^2 = 12 xy$ is a convex curve 
in this region, so lies below the line connecting its endpoints 
on the boundary.  It is easy to verify that this line is given by 
\begin{align}
y = \left(1 + \frac{2}{\sqrt{3}}  \right) (1 - x). 
\end{align}
The inequality \eqref{xyineq} is then satisfied for points above 
this line. Converting back to the original variables, this is 
\begin{align}
\frac{q}{p} > \left(1 + \frac{2}{\sqrt{3}}\right) \left( 1 - \frac{r}{p}\right)
\end{align}
which is equivalent to 
\begin{align}
\label{pin2}
p < (2 \sqrt{3} - 3) q + r \sim 0.464 q + r.
\end{align}
Finally, if $p < (2 \sqrt{3} - 3) q + r$, then $p < q + r$,
so \eqref{in3} is satisfied, and the lower estimate  \eqref{oyf2} holds also. 

\end{proof}

\begin{proof}[Proof of Theorem $\ref{crithm}$]
As noted above in the proof of Lemma \ref{noEinstein}, in the 
case $p = q + r$, 
\begin{align}
\label{scal2}
R_g =  48 rq ( |u_1|^2 + |u_2|^2),
\end{align} 
which is positive except at the single point $[0,0,1]$ (the 
orbifold point with group of order $p$).

Assume by contradiction that $\hat{g}$ is a constant scalar curvature 
metric on $M = \mathbb{CP}^2_{(r,q,p)}$ in the conformal class 
of the Bochner-K\"ahler metric $g$.
Letting $E$ denote the traceless Ricci tensor, 
since $\tilde{g} = R_g^{-2} g$ is Ricci-flat, it follows that 
\begin{align}
E_{\hat{g}} = \phi^{-1} \big( - 2\nabla^2 \phi + (\Delta \phi/2) \tilde{g} \big),
\end{align} 
where $\tilde{g} = \phi^{-2} \hat{g}$, and 
the covariant derivatives are taken with respect to $\hat{g}$.
Next, using the argument of Obata \cite{Obata} by integrating on $M$ it follows that
\begin{align}
\begin{split}
\int_{M} \phi | E_{\hat{g}}|^2 d\hat{V} & = \int_{M} \phi E_{\hat{g}}^{ij} 
\left\{  \phi^{-1} \big( -2 \nabla^2 \phi + (\Delta \phi/2) \tilde{g} \big)_{ij} \right\}  d\hat{V}\\
& =   -2\int_{M}  E_{\hat{g}}^{ij} \nabla_i \nabla_j \phi  d\hat{V}
=-2 \lim_{\epsilon \rightarrow 0} \int_{M \setminus B([0,0,1], \epsilon)}   
E_{\hat{g}}^{ij} \nabla_i \nabla_j \phi d\hat{V}.
\end{split}
\end{align}
Since $\tilde{g} = R_g^{-2} g = \phi^{-2} \hat{g}$, and 
$\hat{g}$ and $g$ are related by a strictly positive conformal 
factor, it follows from \eqref{scal2} that 
$\phi \sim R_g \sim \rho^{2}$ as $\rho \rightarrow 0$, 
where $\rho$ is the distance to $[0,0,1]$ with respect to the metric $\hat{g}$. 
Integration by parts yields 
\begin{align}
\label{ibpf}
\int_{M} \phi | E_{\hat{g}}|^2 d\hat{V} 
& = -2 \lim_{\epsilon \rightarrow 0} \left( \int_{ \partial  B([0,0,1], \epsilon)} 
E_{\hat{g}}^{ij} \nabla_i \phi \nu_j d \sigma
- \int_{M \setminus B([0,0,1], \epsilon)}  
(\nabla_j E_{\hat{g}}^{ij} \cdot \nabla_i \phi) d\hat{V} \right).
\end{align}
By the Bianchi identity, 
the second term on the right-hand side is zero since the scalar curvature 
of $\hat{g}$ is constant.  By 
\cite[Theorem 6.4]{TV2}, $\hat{g}$ is a smooth 
Riemannian orbifold, which implies that the curvature is bounded near $[0,0,1]$.
Since $|\nabla \phi| \sim \rho$ near $[0,0,1]$, 
the first term on the right-hand side of \eqref{ibpf} therefore limits to zero
as $\epsilon \rightarrow 0$. 
Consequently, $E_{\hat{g}} \equiv 0$, and $\hat{g}$ is Einstein. 
This is ruled out by Lemma \ref{noEinstein}. 
\end{proof}
\begin{remark} {\em
In the case $p > r + q$, 
there is a complete conformal Einstein metric away from the zero set of the 
scalar curvature, which is a hypersurface. 
The above Obata argument does not work 
in this case to prove that a possible Yamabe minimizer must be Einstein. 
Indeed, there are many known examples of Bach-flat extremal 
K\"ahler metrics which are conformal to complete Einstein metrics 
away from a hypersurface on smooth manifolds (see for example \cite{Tonn}). 
There is a Yamabe minimizer in any such conformal class 
by the solution of the Yamabe problem on smooth manifolds \cite{Schoen},
which in these examples is easily seen to be a non-Einstein metric. 
}
\end{remark}

%\bibliography{Einstein_references}
\def\cprime{$'$}
\providecommand{\bysame}{\leavevmode\hbox to3em{\hrulefill}\thinspace}
\providecommand{\MR}{\relax\ifhmode\unskip\space\fi MR }
% \MRhref is called by the amsart/book/proc definition of \MR.
\providecommand{\MRhref}[2]{%
  \href{http://www.ams.org/mathscinet-getitem?mr=#1}{#2}
}
\providecommand{\href}[2]{#2}

\end{document}